\def\input@path{{figs/}}
\theoremstyle{definition}
\newtheorem{theorem}{Theorem}%[section]
\newtheorem{lemma}{Lemma}
\newtheorem{proposition}{Proposition}
\newtheorem{corollary}{Corollary}
\theoremstyle{definition}
\theoremstyle{definition}
\newtheorem{remark}{Remark}%[section]
\providecommand{\keywords}[1]  {\textbf{Keywords:} #1}
\providecommand{\acknow}[1] {\textbf{Acknowledgments.} #1}
\let\div\undefined
\DeclareMathOperator{\div}{div} 
\DeclareMathOperator{\e}{e}
\DeclareMathOperator{\diag}{diag}
\renewcommand{\d}[1]{\, \mathrm{d} #1}
\newcommand{\R}{\mathbb{R}}
\newcommand{\C}{\mathbb{C}}
\renewcommand{\phi}{\varphi}
\def\sizeg{ \footnotesize}
\newcommand{\Iion} {I_\text{ion}}
\newcommand{\Ist} {I_\text{st}}
\newcommand{\vref} {v_\text{ref}}
\def \Pk {\mathbb{P}_{k-1}} 
\def \xt{\tilde{x}_{_{[t,Y,h]}}} 
\def \gty{g_{_{t, y_k}}}
\def \gt{\bar{g}_{_{[t,Y,h]}}} 
\def \at{\tilde{a}_{_{[t,Y,h]}}} 
\def \bt{\tilde{b}_{_{[t,Y,h]}}}
\def \L{\mathcal{L}}
\title{
  \vspace{-3.4cm}
  \bf{\Large{
      \mbox{
        \!\!\!\!\!\!\!\!\!\!\!\!\!\!\!\!
        Exponential Adams-Bashforth integrators for stiff ODEs,
      }       
      \\[5pt]
      application to cardiac electrophysiology
  }}
}
\author[1]{Yves Coudi\`ere\thanks{yves.coudiere@inria.fr}}
\author[1]{Charlie Douanla-Lontsi\thanks{charlie.douanla-lontsi@inria.fr}}
\author[2]{Charles Pierre\thanks{charles.pierre@univ-pau.fr}}
\affil[1]{
  INRIA Bordeaux, 
  Institut de Math\'ematiques de Bordeaux, 
  \protect \\
  UMR CNRS 5241, Universit\'e de Bordeaux, France.
}\affil[2]{
  Laboratoire  de Math\'ematiques et de leurs Applications,
  UMR CNRS 5142, \protect \\
  Universit\'e de Pau et des Pays de l'Adour, France.
  }
\begin{document} 

\date{April, 2018}

\maketitle

\vspace{20pt}
\noindent
\keywords{ 
  stiff equations,  
  explicit high-order multistep methods, 
  exponential integrators, 
  stability and convergence, Dahlquist stability
}
\\[8pt]
\acknow{
This study received financial support from the French Government as part of the
``Investissement d'avenir'' program managed by the National Research Agency
(ANR), Grant reference ANR-10-IAHU-04. It also received fundings of the project
ANR-13-MONU-0004-04.
}
% \subjclass{
%   65L04, 65L06, 65L20, 65L99  
% }
\\[8pt]
  \begin{abstract}
    %% Text of abstract
    Models in cardiac electrophysiology are coupled systems of reaction diffusion
    PDE and of ODE. The ODE system displays a very stiff behavior. It is non
    linear and its upgrade at each time step is a preponderant load in the
    computational cost. The issue is to develop high order explicit and stable
    methods to cope with this situation.

    In this article is analyzed the resort to exponential Adams Bashforth
    (EAB) integrators in cardiac electrophysiology. The method is presented in the
    framework of a general and varying stabilizer, that is well suited in this
    context. Stability under perturbation (or 0-stability) is proven. It provides
    a new approach for the convergence analysis of the method. The Dahlquist
    stability properties of the method is performed. It is presented in a new
    framework that incorporates the discrepancy between the stabilizer and the
    system Jacobian matrix. Provided this discrepancy is small enough, the method
    is shown to be A(alpha)-stable. This result is interesting for an explicit
    time-stepping method. Numerical experiments are presented for two classes of
    stiff models in cardiac electrophysiology. They include performances
    comparisons with several classical methods. The EAB method is observed to be
    as stable as implicit solvers and cheaper at equal level of accuracy.
    \\$~$
  \end{abstract}

\section*{Introduction}
\label{sec:intro}
Computations in cardiac electrophysiology have to face two constraints.
Firstly the stiffness due to heterogeneous time and space scales.
This is usually dealt with by considering very fine grids. 
That strategy is associated with large computational costs, still challenging in dimension 3.
Secondly, the resolution of the reaction terms that appears in the ionic models
has an important cost. 
That resolution occur at each grid node.
The total amount of evaluation of the reaction terms has to be maintained as low as possible by the considered numerical method.
Implicit solvers therefore are usually avoided.
Exponential integrators are well adapted to cope with these two constraints.
Actually they both allow an explicit resolution of the reaction term and display strong stability properties.
In this article is studied and analyzed exponential time-stepping methods
dedicated to the resolution of  reaction equations.

Models for the propagation of the cardiac action
potential are evolution reaction diffusion equations
coupled with ODE systems. The widely used monodomain model
\cite{CLEMENS-NENONEN-04, FRANZONE-PAVARINO-TACCARDI-05.1,
  FRANZONE-PAVARINO-TACCARDI-05.2} formulates as,
\begin{displaymath}
  \frac{\partial v}{\partial t} = Av + f_1(v,w,x,t),
  \quad \frac{\partial w}{\partial t} = f_2(v,w,x,t),
\end{displaymath}
with space and time variables $x\in\Omega\subset \R^d$ and $t\in\R$. The
unknowns are the functions $v(t,x)\in\R$ (the transmembrane voltage) and
$w(t,x)\in\R^N$ (a vector that gathers variables describing pointwise the
electrophysiological state of the heart cells). In the monodomain model, the
diffusion operator is $A$($:=\div(g(x)\nabla \cdot)$), and the reaction terms are
the nonlinear functions $f_1$, $f_2$.
These functions model the cellular electrophysiology.
They are called ionic models.
Ionic models are of complex nature,
see e.g. \cite{beeler-reuter, LR2a, tnnp, winslow}. A special attention has to
be paid to the number of evaluations of the functions $f_1$ and $f_2$, and
implicit solvers are usually avoided. Though we ultimately use an
implicit/explicit method to solve the PDE, we need an efficient, fast and robust
method to integrate the reaction terms. Therefore, this article focuses on the
time integration of the stiff ODE system
\begin{equation}
  \label{eq:F1}
  \diff{y}{t}
  = f(t,y), \quad y(0) = y_{0},
\end{equation}
in the special cases where $f(t,y)$ is an ionic model from cellular
electrophysiology.
For that case, stiffness is due to the co existence of fast and slow variables.
Fast variables are given in  \eqref{eq:F1} by  equations  of the form,
\begin{equation}
  \label{eq:F2-0}
  \diff{y_i}{t}
   = f_i(t,y) = a_i(t,y)y_i + b_i(t,y).
\end{equation}
Here $ a_i(t,y)\in\R$ is provided by the model.
This scalar rate of variation will be inserted in the numerical method to stabilize its resolution.
% The main difficulty in the field is the stiffness both in space and in time
% displayed by the solutions. This difficulty is strengthened by the nature of
% the reaction terms $f_1$, $f_2$. These terms are ionic models describing the
% electrical activity at the cellular level. Ionic models for heart cells are
% of
% rather complex nature, . The
% calculations $(v,w,x,t)\rightarrow f_i(v,w,x,t)$ represent a large
% computational load. The total amount of such calculations need to be
% maintained as low as possible by the chosen numerical method. Fully implicit
% time stepping methods (that require a non linear solver) therefore are
% avoided.

% In order to build a numerical method that allows accurate simulations on
% coarse time and space grids, and that avoids non linear solver, we will
% consider high order semi implicit time stepping methods. Such methods are
% implicit regarding the diffusion term $A$ but explicit regarding the reaction
% term. The integration of the diffusive part can be held considering classical
% implicit solvers (for instance BDF$_k$). In contrast the definition of an high
% order stable and explicit solver for the reactive part is quite challenging up
% to date.

Exponential integrators are a class of explicit methods meanwhile exhibiting
strong stability properties. They have motivated many studies along the past 15
years, among which we quote e.g.  \cite{hochbruck-98, Cox_Mat, oster_ex_RK,
  exp-rozenbrock-2009, Tokman-2012, Ostermann-ERK-2014} and refer to
\cite{Bor_W, Hochbruck-Ostermann-2010, hochbruck-2015} for general reviews. They
already have been used in cardiac electrophysiology, as e.g. in
\cite{perego-2009, borgers-2013}. Exponential integrators are based on a
reformulation of \eqref{eq:F1} as,
\begin{equation}
  \label{eq:F2}
  \diff{y}{t}
   = a(t,y)y+b(t,y), \quad y(0) = y_{0},
\end{equation}
(with $f = ay+b$) where the linear part $a(t,y)$ is used to stabilize the
resolution. Basically $a(t,y)$ is assumed to capture the stiffest modes of the
Jacobian matrix of system \eqref{eq:F1}. Stabilization is brought by performing
an exact integration of these modes. This exact integration involves the
computation of the exponential $\exp(a(t_n,y_n)h)$ at the considered point. This
computation is the supplementary cost for exponential integrators as compared to
other time stepping methods.

Exponential integrators of Adams type are explicit multistep exponential
integrators. They were first introduced by Certaine \cite{Cer_J} in 1960 and N{\o}rsettin \cite{Nors_EAB} in 1969 for a constant
linear part $A=a(t,y)$ in~\eqref{eq:F2}. The schemes are derived using a
polynomial interpolation of the non linear term $b(t,y)$. It recently received
an increasing interest~\cite{Tokman-2006, Cal_Pal, Oster_class_Exp} and various
convergence analysis have been completed in this particular case~\cite{EAB_M_O,
  Auzinger-2011, oster-2013}.
Non constant linear parts have been less studied.
Lee and Preiser \cite{Lee_Stan} in 1978 and by Chu \cite{CHU} in 1983
first suggested to rewrite  the equation~\eqref{eq:F1}  at each time instant $t_n$ as,
rewritten as,
\begin{equation}
  \label{F3}
  \diff{y}{t}
  = a_n y + g_n(t,y),\quad y(t_n) = y_n,
\end{equation}
with $a_n = a(t_n, y_n)$ and $g_n(t,y) = b(t,y) + (a(t,y)- a_n) y$. In the
sequel, $a_n$ is referred to as the {\em stabilizer}. It is updated after each
time step. 
Recently, Ostermann \textit{et al} \cite{EAB_M_O,oster-2013}
analyzed 
the linearized exponential Adams method, where the
stabilizer $a_n$ is set to the Jacobian matrix of $f(t,y)$ in~\eqref{eq:F1}.
This choice requires the computation of a matrix exponential at every time step.
Moreover, when the fast variables of the system are known, stabilization can be performed only on these variables. Considering the full Jacobian as the stabilizer
implies unnecessary computational efforts.
To avoid these problems, an alternative is to set the stabilizer as a part or as an approximation of the Jacobian.
This has been analyzed in \cite{Tranquilli-Sandu-2014} and in
\cite{Rainwater-Tokman-2016} for 
exponential Rosenbrock and  exponential Runge Kutta type methods respectively.
That strategy is well adapted to cardiac electrophysiology, where a diagonal stabilizer associated with the fast variable is directly provided by the model with equation (\ref{eq:F2-0}).
The present contribution is to analyze general varying $a(t,y)$ in (\ref{eq:F2}) for
exponential integrators of Adams type, 
referred to as \textit{exponential Adams-Bashforth}, and
shortly denoted EAB. 
Together with the EAB scheme, we introduce a new variant,
that we called \textit{integral exponential Adams-Bashforth}, denoted I-EAB.

% Providing a versatile choice for $a(t,y)$ however has significant practical
% consequences. For instance, the stabilizer $a_n$ may be chosen diagonal, so as
% to avoid the resort to complex Padé or rational approximations for the
% computation of $\exp(a_n h)$ (see~\cite{Hochbruck-Ostermann-2010,
% hochbruck-2015}). Moreover, for complex systems of equations~\eqref{eq:F1}, a
% {\em good choice} for $a(t,y)$ might be deduced from the underlying physics of
% the problem itself. This is particularly true when the stiffness is caused by
% only a part of the variables, so avoiding the superfluous computation of the
% whole Jacobian matrix. This situation typically occurs in the framework of
% electrophysiology, as developed in section \ref{sec:num-res}.

% \todo{Ou alors on le met là ?}

The convergence analysis held in \cite{EAB_M_O} extends to the case of general
varying stabilizers. However there is a lack of results concerning the stability
in that case: for instance, consider the simpler exponential Euler method,
defined by
\begin{displaymath}
  y_{n+1} = s(t_n, y_n,h) := \e^{a_n h} y_n + h \phi_1(a_n h)b_n,\quad
  \phi_1(z) = (\e^z - 1)/z. 
\end{displaymath}
Stability under perturbation (also called 0-stability) can be easily proven
provided that the scheme generator $s(t,y,h)$ is globally Lipschitz in $y$ with
a constant bounded by $1 + Ch$. Therefore stability under perturbation
classically is studied by analyzing the partial derivative $\partial_y s$. This
can be done in the case where $a(t,y)$ is either a constant operator or a
diagonal varying matrix. In the general case however things turn out to be more
complicated. The reason is that we do not have the following general series
expansion, as $\epsilon\rightarrow 0$,
\begin{displaymath}
  e^{M + \epsilon N} \neq e^{M} + \epsilon e^{M} N + O(\epsilon^2),
\end{displaymath}
unless the two matrices $M$ and $N$ are commuting. {\em As a consequence
  differentiating $\e^{a(t,y)h}$ in $y$ cannot be done without very restrictive
  assumptions on $a(t,y)$.}
We present here a stability analysis for general varying stabilizers. 
This will be done by introducing relaxed stability
conditions on the scheme generator $s(t,y,h)$. Together with a consistency
analysis, it provides a new proof for the convergence of the EAB schemes, in
the spirit of~\cite{EAB_M_O}.

Stability under perturbation provides results of qualitative nature. In
addition, the Dahlquist stability analysis strengthens these results. It is a
practical tool that allows to dimension the time step $h$ with respect to the
variations of $f(t,y)$ in equation~\eqref{eq:F1}. The analysis is made by
setting $f(t,y)=\lambda y$ in~\eqref{eq:F1}. For exponential
integrators with general varying stabilizer, the analysis must
incorporate the decomposition of $f(t,y)=\lambda y$ used
in~\eqref{eq:F2}. The stability domain of the considered method will depend on
the relationship between $\lambda$ and $a(t,y)$, following a concept first
introduced in \cite{perego-2009}. We numerically establish that EAB methods are
$A(\alpha)$ stable provided that the stabilizer is sufficiently close to the
system Jacobian matrix (precise definitions are in section
\ref{sec:stab-domain}). Moreover the angle $\alpha$ approaches $\pi/2$ when the
stabilizer goes to the system Jacobian matrix. In contrast, there exists no
$A(0)$ stable explicit linear multistep method (see \cite[chapter
V.2]{Hairer-ODE-II}. This property is  remarkable for explicit methods.

Numerical experiments for the EAB and I-EAB scheme are provided in section
\ref{sec:num-res}, in the context of cardiac electrophysiology. 
% The stabilizer
% is set to a part of the diagonal of the Jacobian matrix that corresponds to the
% model stiffest equations. 
Robustness to stiffness is studied with this
choice. It is numerically shown to be comparable to implicit methods both in
terms of accuracy and of stability condition on the time step. We conclude that
EAB methods are well suited for solving stiff differential problems. In
particular they allow computations at large time step with good accuracy
properties and cheap cost.
\\ \\ \indent
The article is organized as follows. The EAB and I-EAB methods are introduced
in section~\ref{sec:def-meth}. The general stability and convergence results are
stated and proved in section~\ref{sec:ab-theory}. The EAB and I-EAB stability
under perturbation and convergence are proved in
section~\ref{sec:proof-conv}. The Dahlquist stability is investigated in
section~\ref{sec:stab-domain}, and the numerical experiments end the article, in
section \ref{sec:num-res}.

In all this paper,   $h>0$ is a constant time-step and 
$t_n = n h$ are the time instants associated with  the numerical approximate $y_n$ of the solution of the ODE (\ref{eq:F1}).

\section{Scheme definitions}
\label{sec:def-meth}

\subsection{The EAB$_k$ method}
\label{sec:EABk}
The exact solution at time $t_{n+1}$ to the equation~\eqref{F3} (with
$a_n = a(t_n,y_n)$) is given by the variation of the constants formula:
\begin{equation}
  \label{eq:variation-const}
  y(t_{n+1}) = e^{a_nh} \left( y(t_n) +
    \int_{0}^{h} e^{-a_n \tau} g_n(t_n+\tau, y(t_n+\tau)) \d{\tau}
  \right).
\end{equation}
Using the $k$ approximations $y_{n-j} \simeq y(t_{n-j})$ for
$j=0\ldots k-1$, we build the Lagrange polynomial $\tilde{g}_n$ of degree at
most $k-1$ that satisfies,
\begin{equation}
  \label{def:gn}
  \tilde{g}_n(t_{n-j}) = g_{nj} := g(t_{n-j}, y_{n-j}),\quad 0\le j\le k-1.
\end{equation}
It provides the numerical approximation $y_{n+1} \simeq y(t_{n+1})$ as
\begin{equation}
  \label{EAB-1}
  y_{n+1} = \e^{a_nh} \left( y_{n}  + 
    \int_0^h \e^{-a_{n} \tau } \tilde{g}_n(t_n + \tau)\d{\tau}  
  \right).
\end{equation}
The Taylor expansion of the polynomial $\tilde{g}_n$ is
$\tilde{g}_n(t_n + \tau) = \sum_{j=1}^{k} \frac{\gamma_{nj}}{(j-1)!}  \left(\tau
  / h \right)^{j-1}$, where the coefficients $\gamma_{nj}$ are uniquely
determined by \eqref{def:gn}, and actually given in table \ref{tab:gamma-nj} for
$k=1,2,3,4$. An exact integration of the integral in equation~\eqref{EAB-1} may
be performed:% We find that
\begin{equation}
  \label{EAB-2}
  y_{n+1} = \e^{a_n h} y_n + h \sum_{j=1}^{k} \phi_{j}(a_n h)\gamma_{nj},
\end{equation}
where the functions $\phi_j$, originally introduced in \cite{Nors_EAB}, are
recursively defined (for $j\ge 0$) by
\begin{equation}
  \label{phi_k}
  \phi_{0}(z) =  \e^z, \quad \phi_{j+1}(z) =
  \frac{\phi_{j}(z)-\phi_j(0)}{z} \quad  \text{and} \quad \phi_j(0) =
  \frac1{j!}.
\end{equation}
The equation~\eqref{EAB-2} defines the Exponential Adams-Bashforth method
of order $k$, denoted by EAB$_k$.
\begin{table}[h!]
  \caption{Coefficients $\gamma_{nj}$ for the EAB$_k${} schemes}
  \label{tab:gamma-nj}
  \centering
  \begin{tabular}{lllll}
    \toprule
    $k$ & 1 & 2 & 3 & 4 \\
    \midrule
    $\gamma_{n1}$ & $g_n$ & $g_n$ & $g_n$ & $g_n$ \\
    $\gamma_{n2}$ &  & $g_n - g_{n-1}$ & $\frac{3}{2} g_n -2 g_{n-1} + \frac{1}{2} g_{n-2}$ & $\frac{11}{6}g_n-3g_{n-1}+\frac{3}{2}g_{n-2}-\frac{1}{3}g_{n-3}$\\
    $\gamma_{n3}$ &  &  & $g_n-2g_{n-1} + g_{n-2}$ & $2g_n-5g_{n-1}+4g_{n-2}-g_{n-3}$ \\
    $\gamma_{n4}$ &  &  &  & $g_n-3g_{n-1}+3g_{n-2}-g_{n-3}$ \\
    \bottomrule
  \end{tabular}      
\end{table}
\begin{remark}
  When $a(t,y)=\diag(d_i)$ is a diagonal matrix,
  $\phi_k(a_n h) = \diag(\phi_k(d_i))$ can be computed component-wise. Its
  computation is straightforward.
\end{remark}  
\begin{remark}
  With the definition \eqref{phi_k}, the functions $\phi_k$ are analytic on the
  whole complex plane. Therefore the EAB$_k${} scheme definition~\eqref{EAB-2} makes
  sense for a matrix term $a(t,y)$ in equation~\eqref{eq:F2} without particular
  assumption.% It also makes sense for sectorial operators since the
  % $\phi_k$ are bounded on the left half complex plane.
\end{remark}  
\begin{remark}
  The computation of $y_{n+1}$ in the formula~\eqref{EAB-2} requires the
  computation of $\phi_{j}(a_n h)$ for $j=0,\dots,k$. This computational effort
  can be reduced with the recursive definition~\eqref{phi_k}. In practice only
  $\phi_{0}(a_n h)$ needs to be computed. This is detailed in section
  \ref{sec:implementation}.
\end{remark}
\subsection{A variant: the I-EAB$_k$ method}
\label{sec:IEABk}
If the matrix $a(t,y)$ is diagonal, we can take advantage of the following
version for the variation of the constants formula:
\begin{displaymath}
  y(t_{n+1}) = \e^{A_n(h)} \left( y(t_n)+ \int_{0}^{h}
    \e^{-A_n(\tau)} b(y(t_n+\tau),t_n+\tau) \d{\tau} \right),
\end{displaymath}
where $A_n(\tau) = \int_{0}^{\tau} a(t_n+\sigma,y(t_n+\sigma)) d\sigma$.
% 
% where $A_n'(\tau)= a(t_n+\tau,y(t_n+\tau))$.
% 
% Note that this formula is no longer valid in the case of a varying and non
% diagonal term $a(t,y)$.
% 
An attempt to improve the EAB$_k$ formula~\eqref{EAB-2} is to replace $a(t,y)$ and
$b(t,y)$ in the integral above by their Lagrange interpolation polynomials. At
time $t_n$, we define the two polynomials $\tilde{a}_n$ and $\tilde{b}_n$ of
degree at most $k-1$ so that:
\begin{displaymath}
  \tilde{a}_n(t_{n-j}) = a(t_{n-j}, y_{n-j}),\quad \tilde{b}_n(t_{n-j}) =
  b(t_{n-j}, y_{n-j}),\quad j=0\ldots k-1, 
\end{displaymath}
and the primitive
$\tilde{A}_n(\tau) = \int_{0}^{\tau} \tilde{a}(t_n+\sigma,y(t_n+\sigma))
d\sigma$. The resulting approximate solution at time $t_{n+1}$ is finally given
by the formula
\begin{equation}
  \label{sch_IEAB}
  y_{n+1} = \e^{\tilde{A}_n(h)} \left( y_{n} + 
    \int_{0}^{h} \e^{-\tilde{A}_{n}(\tau)} \tilde{b}_n(t_n+\tau) \d{\tau}
  \right).
\end{equation}
The method is denoted I-EAB$_k$ (for Integral EAB$_k$). Dislike for the
formula~\eqref{EAB-1}, no exact integration formula is available, because of the
term $\e^{-\tilde{A}_{n}(\tau)}$. A quadrature rule is required for the actual
numerical computation of the integral in
formula~\eqref{sch_IEAB}. Implementation details are given in
section~\ref{sec:implementation}.
\section{Stability conditions and convergence}
\label{sec:ab-theory}
The equation~\eqref{eq:F1} is considered on a finite dimensional vector space
$E$ with norm $|\cdot|_E$. We fix a final time $T>0$ and assume that equation (\ref{eq:F1}) has
a solution $y$ on $[0,T]$. We adopt the general settings for the analysis
of $k$-multistep methods following \cite{Hairer-ODE-I}. The space $E^k$ is
equipped with the maximum norm $|Y|_\infty = \max_{1\le i\le k} |y_i |_E$ with
$Y=(y_1,\dots,y_k)\in E^k$. A $k$-multistep scheme is defined by a mapping,
\begin{displaymath}
  s:~ (t,Y,h) \in \R\times E^k \times\R^+ \mapsto s(t,Y,h)\in E.
\end{displaymath}
For instance, the EAB$_k$ scheme rewrites as $y_{n+1}=s(t_n, Y, h)$ with
$Y=(y_{n-k+1}, \dots, y_n)$ in the formula~\eqref{EAB-2}. The scheme generator
is the mapping $S$ given by
\begin{displaymath}
  S~: (t,Y,h) \in \R\times E^k \times\R^+ \longrightarrow 
  \left( y_2 , \ldots , y_k, s(t,Y,h)\right) \in    E^k .
\end{displaymath}
A numerical solution is a
sequence $(Y_n)$ in $E^k$ for  $n\ge k-1$
so that
\begin{equation}
  \label{num-sol}
  Y_{n+1} = S(t_n, Y_n, h)\quad\text{for}\quad n\ge k-1,
\end{equation}
$Y_{k-1} = (y_0,\dots,y_{k-1})$ being its 
initial condition.
A perturbed numerical solution is a sequence $(Z_n)$ in $E^k$ for
$n\ge k-1$
such that
% $k-1\le n\le T/h$ such that
\begin{equation}\label{num-sol-pert}
  Z_{k-1} = Y_{k-1} + \xi_{k-1}, \quad Z_{n+1}= S(t_n,Z_{n},h) +
  \xi_{n+1} \quad\text{for}\quad n\ge k-1,%k\le n+1 \le T/h,
\end{equation}
with $(\xi_n)\in E^k$ for $n\ge k-1$.
The scheme is said to be stable under perturbation (or 0-stable) if: being given
a numerical solution $(Y_n)$ as in \eqref{num-sol} there exists a (stability)
constant $L_s>0$ so that for any perturbation $(Z_n)$ as defined in
\eqref{num-sol-pert} we have,
\begin{equation}
  \label{def:0-stablity}
  \max_{k-1\le n\le T/h} |Y_n-Z_n|_\infty  \le L_s 
  \sum_{k-1\le n\le T/h} |\xi_n|_\infty .
\end{equation}
\begin{proposition}
  \label{prop:stab}
  Assume that there exists constants $C_1>0$ and $C_2>0$ such that
  \begin{gather}
    \label{eq:cond-stab-2}
    1 + |S(t,Y,h) |_\infty \le \left( 1 + |Y |_\infty \right) (1+C_1 h), \\
    \label{eq:cond-stab-1}
    |S(t,Y,h) - S(t,Z,h) |_\infty \le |Y - Z |_\infty \left( 1 + C_2 h ( 1 + |Y
      |_\infty ) \right),
  \end{gather}
  for $0\le t\le T$, and for $Y,Z\in E^k$. Then, the numerical scheme is stable
  under perturbation with the constant $L_s$ in~\eqref{def:0-stablity} given by
  \begin{equation}
    \label{def-Ls}
    L_s = \e^{C^\star T},\quad C^\star := C_2\e^{C_1T}\left( 1 + |Y_{k-1} |_\infty  \right).
  \end{equation}
\end{proposition}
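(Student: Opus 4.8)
The strategy is the classical Lady Windermere's fan / telescoping argument adapted to the two hypotheses \eqref{eq:cond-stab-2}--\eqref{eq:cond-stab-1}. The first hypothesis is a growth bound that will be used to keep the iterates $Y_n$ a priori bounded on $[0,T]$; the second is a one-step Lipschitz estimate whose Lipschitz constant degrades with $|Y|_\infty$, hence the need for the a priori bound. I would organize the proof in three steps.

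\emph{Step 1: an a priori bound on the numerical solution.} Applying \eqref{eq:cond-stab-2} inductively starting from $Y_{k-1}$, one gets $1+|Y_n|_\infty \le (1+|Y_{k-1}|_\infty)(1+C_1h)^{n-k+1} \le (1+|Y_{k-1}|_\infty)\e^{C_1 T}$ for all $k-1\le n\le T/h$, since $(1+C_1h)^m\le \e^{C_1 hm}\le \e^{C_1T}$. Denote the right-hand side by $R:=(1+|Y_{k-1}|_\infty)\e^{C_1T}$, so that $1+|Y_n|_\infty\le R$ uniformly in $n$ and in $h$. In particular $C_2h(1+|Y_n|_\infty)\le C_2 R h$, and note $C^\star = C_2 R$ is exactly the constant appearing in \eqref{def-Ls}.

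\emph{Step 2: propagation of the perturbation.} Set $e_n:=|Y_n-Z_n|_\infty$. From \eqref{num-sol}, \eqref{num-sol-pert} and the triangle inequality,
\begin{displaymath}
  e_{n+1}\le |S(t_n,Y_n,h)-S(t_n,Z_n,h)|_\infty + |\xi_{n+1}|_\infty
  \le e_n\bigl(1+C_2h(1+|Y_n|_\infty)\bigr)+|\xi_{n+1}|_\infty,
\end{displaymath}
where I insist on applying \eqref{eq:cond-stab-1} with $Y=Y_n$ (the true numerical solution) so that the a priori bound of Step 1 applies and yields $e_{n+1}\le (1+C^\star h)e_n + |\xi_{n+1}|_\infty$. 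Also $e_{k-1}=|\xi_{k-1}|_\infty$. Unrolling this linear recursion gives, for $k-1\le n\le T/h$,
\begin{displaymath}
  e_n \le (1+C^\star h)^{n-k+1}|\xi_{k-1}|_\infty
   + \sum_{j=k}^{n}(1+C^\star h)^{n-j}|\xi_j|_\infty
  \le \e^{C^\star T}\sum_{k-1\le j\le T/h}|\xi_j|_\infty,
\end{displaymath}
using $(1+C^\star h)^{m}\le \e^{C^\star hm}\le \e^{C^\star T}$ for every exponent $m\le n-k+1\le T/h$ occurring above.

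\emph{Step 3: conclusion.} Taking the maximum over $k-1\le n\le T/h$ in the last display yields exactly \eqref{def:0-stablity} with $L_s=\e^{C^\star T}$ and $C^\star=C_2\e^{C_1T}(1+|Y_{k-1}|_\infty)$, as claimed in \eqref{def-Ls}.

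The only subtle point — and the one I would highlight — is that \eqref{eq:cond-stab-1} has a state-dependent Lipschitz constant, so it must be invoked at the unperturbed iterate $Y_n$ and \emph{not} at $Z_n$; otherwise one would need an a priori bound on the perturbed sequence, which is not available (the $\xi_n$ are arbitrary). Everything else is routine: bounding $(1+Ch)^m$ by $\e^{CT}$ and solving a scalar linear recurrence by telescoping. One should also note the number of terms in the sum, $\#\{j: k-1\le j\le T/h\}\approx T/h$, does not enter the constant $L_s$ — the geometric factor $(1+C^\star h)^{n-j}\le \e^{C^\star T}$ absorbs it uniformly, which is what makes the estimate a genuine stability bound rather than one blowing up as $h\to0$.
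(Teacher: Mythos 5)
Your proof is correct and follows essentially the same route as the paper's: an a priori bound on $(Y_n)$ from \eqref{eq:cond-stab-2}, insertion of that bound into the one-step Lipschitz estimate \eqref{eq:cond-stab-1} evaluated at the unperturbed iterate $Y_n$, and a telescoped linear recursion bounded by $\e^{C^\star T}$. The subtlety you flag --- that \eqref{eq:cond-stab-1} must be applied with the state-dependent constant at $Y_n$ rather than $Z_n$ --- is exactly how the paper proceeds.
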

\begin{proof}
  Consider a numerical solution $(Y_n)$ in \eqref{num-sol}. A recursion on
  condition \eqref{eq:cond-stab-2} gives,
  \begin{displaymath}
    1 + |Y_n |_\infty \le \left(1 + |Y_{k-1}
      |_\infty\right) (1+C_1 h)^{n-k+1} \le \e^{C_1 T} \left(1 + |Y_{k-1}
      |_\infty\right),
  \end{displaymath}
  since $(1+x)^p \le \e^{px}$ and $(n-k+1) h \le nh \le T$.
  Now, consider a perturbation $(Z_n)$ of $(Y_n)$ given by
  \eqref{num-sol-pert}. Using the condition \eqref{eq:cond-stab-1} together with
  the previous inequality,
  \begin{align*}
    |Y_{n+1} - Z_{n+1}|_\infty &\le |S(t_n,Y_n,h) - S(t_n,Z_n,h) |_\infty +
    |\xi_{n+1} |_\infty \\
    &\le |Y_n - Z_n |_\infty \left( 1 + C_2 h ( 1 + |Y_n |_\infty ) \right) +
    |\xi_{n+1} |_\infty \\ 
    &\le |Y_n - Z_n |_\infty \left( 1 + C_2 \e^{C_1 T} ( 1 + |Y_{k-1} |_\infty )h
    \right) + |\xi_{n+1} |_\infty  \\
    &\le |Y_n - Z_n |_\infty \left( 1 + C^\star h \right) + |\xi_{n+1}|_\infty,
  \end{align*}
  where $C^\star := C_2\e^{C_1T}\left( 1 + |Y_{k-1} |_\infty \right)$. By
  recursion we get,
  \begin{align*}
    |Y_n - Z_n |_\infty &\le \left(1+C^\star h\right)^{n-k+1} |Y_{k-1} -
    Z_{k-1}|_\infty + \sum_{i=0}^{n-k} \left(1+C^\star h\right)^i
    |\xi_{n-i}|_\infty \\ 
    &\le \left (1+C^\star h\right)^n \sum_{i=k-1}^{n} |\xi_{i}|_\infty \le
    \e^{c^\star T} \sum_{i=k-1}^{n} |\xi_{i}|_\infty, 
  \end{align*}
  which ends the proof.
\end{proof}
Like in the classical cases, stability under perturbation together with
consistency ensures convergence. 
Let us specify this point.
For the considered solution $y(t)$ of problem \eqref{eq:F1} on $[0,T]$, we define,
\begin{equation}
  \label{def:Y(t)}
  Y(t)=\left (y\left(t - (k-1)h\right),\ldots y(t)\right )\in E^k\quad 
  \text{for}\quad 
  (k-1)h\le t\le T.
\end{equation}
The local error at time $t_n$ is,
\begin{equation}
  \label{def:local-error}
  \epsilon(t_n,h) = Y(t_{n+1}) - S(t_n, Y(t_n),h).
\end{equation}
The scheme is said to be consistent of order $p$ if there exists a (consistency)
constant~$L_c>0$ only depending on $y(t)$ such that
\begin{displaymath}%\label{eq:consistant}
  \max_{k-1 \le n \le T/h}  |\epsilon(t_n,h)|_\infty \le L_c h^{p+1}.
\end{displaymath}

\begin{corollary}
  \label{cor:conv}
  If the scheme satisfies the stability conditions~\eqref{eq:cond-stab-2}
  and~\eqref{eq:cond-stab-1}, and is consistent of order $p$, then a numerical
  solution $(Y_n)$ given by \eqref{num-sol} satisfies,
  \begin{equation}
    \label{def:conv}
    \max_{k-1 \le n \le T/h} |Y(t_{n}) - Y_n |_\infty \le L_s L_c T h^{p} + L_s
    |\xi_0|_\infty,
  \end{equation}
  where $\xi_0 = Y(t_{k-1}) - Y_{k-1}$ denotes the error on the initial
  data, and the constant $L_s$ is as in equation~\eqref{def-Ls}.
\end{corollary}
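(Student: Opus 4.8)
The plan is to recognize the sampled exact solution as a particular \emph{perturbed} numerical solution, and then apply Proposition~\ref{prop:stab} verbatim. First I would set $Z_n := Y(t_n)$ for $k-1\le n\le T/h$, with $Y(t)$ as in~\eqref{def:Y(t)}. By construction $Z_{k-1} = Y(t_{k-1}) = Y_{k-1} + \xi_0$, where $\xi_0 = Y(t_{k-1}) - Y_{k-1}$ is the error on the initial data. For $n\ge k-1$, the definition~\eqref{def:local-error} of the local error rewrites as $Y(t_{n+1}) = S(t_n, Y(t_n), h) + \epsilon(t_n,h)$, that is $Z_{n+1} = S(t_n, Z_n, h) + \epsilon(t_n,h)$. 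Hence $(Z_n)$ is precisely a perturbed numerical solution in the sense of~\eqref{num-sol-pert}, associated with the numerical solution $(Y_n)$ of~\eqref{num-sol} and the perturbation sequence $\xi_{k-1} = \xi_0$, $\xi_{n+1} = \epsilon(t_n,h)$ for $n\ge k-1$.

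Second, since the scheme satisfies the stability conditions~\eqref{eq:cond-stab-2} and~\eqref{eq:cond-stab-1}, Proposition~\ref{prop:stab} applies and gives
\[
  \max_{k-1\le n\le T/h} |Y(t_n) - Y_n|_\infty
  = \max_{k-1\le n\le T/h} |Z_n - Y_n|_\infty
  \le L_s \sum_{k-1\le n\le T/h} |\xi_n|_\infty,
\]
with $L_s = \e^{C^\star T}$ as in~\eqref{def-Ls}. Splitting the sum into the initial contribution $|\xi_{k-1}|_\infty = |\xi_0|_\infty$ and the local error contributions $|\xi_{n+1}|_\infty = |\epsilon(t_n,h)|_\infty$, and using that there are at most $T/h$ such indices $n$ together with the consistency estimate $|\epsilon(t_n,h)|_\infty \le L_c h^{p+1}$, I would bound
\[
  \sum_{k-1\le n\le T/h} |\xi_n|_\infty \le |\xi_0|_\infty + \frac{T}{h}\, L_c h^{p+1} = |\xi_0|_\infty + L_c T h^{p}.
\]
Substituting this into the previous inequality yields exactly the claimed bound $L_s L_c T h^{p} + L_s |\xi_0|_\infty$.

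The argument is entirely structural, so there is no genuine analytic obstacle at this stage; the only point requiring care is the index shift between $\xi_{n+1}$ and $\epsilon(t_n,h)$ and the verification that the number of local-error terms is controlled by $T/h$. All the substantive work — checking the Lipschitz-type bounds~\eqref{eq:cond-stab-2}--\eqref{eq:cond-stab-1} and the order-$p$ consistency estimate for the EAB$_k$ and I-EAB$_k$ schemes — is precisely what is deferred to the later sections, and this corollary simply packages it into a convergence rate.
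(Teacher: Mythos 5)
Your proposal is correct and follows exactly the paper's own argument: identify $(Y(t_n))$ as a perturbed numerical solution with $\xi_{k-1}=\xi_0$ and $\xi_{n+1}=\epsilon(t_n,h)$, apply Proposition~\ref{prop:stab}, and bound the sum of local errors by $(T/h)\,L_c h^{p+1}=L_c T h^p$. No gaps; the index bookkeeping you flag is handled the same way in the paper.
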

\begin{remark}
  Note that the stability constant $L_s$ in \eqref{def-Ls} depends on
  $|Y_{k-1}|_\infty$, and then on $h$. This is not a problem since $L_s$ can be
  bounded uniformly as $h\rightarrow 0$ for $Y_{k-1}$ in a neighbourhood of
  $y_0$.
\end{remark}
\begin{proof}
  We have $Y(t_{k-1}) = Y_{k-1} + \xi_0$ and
  $Y(t_{n+1}) = S(t_n, Y(t_n),h) + \epsilon(t_n,h)$. Therefore the sequence
  $(Y(t_n))$ is a perturbation of the numerical solution $(Y_n)$ in the sense
  of~\eqref{num-sol-pert}. As a consequence, proposition \ref{prop:stab} shows
  that
  \begin{align*}
    \max_{k-1\le n\le T/h} |Y_n-Y(t_n)|_E \le L_s \left( |\xi_0| + \sum_{k\le
        n\le T/h} |\epsilon(t_n,h)| \right) \le L_s |\xi_0| + L_s L_c \left(
      \sum_{k\le n\le T/h} h \right) h^{p},
  \end{align*}
  and the convergence result follows.
\end{proof}
\section{EAB$_k$ and I-EAB$_k$ schemes analysis}
\label{sec:proof-conv}
The space $E$ is assumed to be $E = \R^N$ with its canonical basis and with
${|\cdot|_E}$ the maximum norm. The space of operators on $E$ is equipped with
the associated operator norm, and associated to $N\times N$ matrices. Thus
$a(t,y)$ is a $N\times N$ matrix and its norm $|a(t,y)|$ is the matrix norm
associated to the maximum norm on $\R^N$.
\\
It is commonly assumed for the numerical analysis of ODE solvers that $f$ in the
equation~\eqref{eq:F1} is uniformly Lipschitz in its second component $y$. With the formulation (\ref{eq:F2}), the following supplementary hypothesis  will be needed: on $\R\times E$,
\begin{equation}
  \label{eq:a-b-f-Lipschitz}
  \vert a(t,y)\vert \le M_a,\quad 
  a(t,y),~b(t,y)~~\text{and}~f(t,y)~~\text{uniformly Lipschitz in}~y.
\end{equation}
We denote by $K_f$, $K_a$ and $K_b$ the Lipschitz constant for $f$, $a$ and $b$ respectively.
\begin{theorem}
  \label{thm:conv}
  With the assumptions \eqref{eq:a-b-f-Lipschitz}, the EAB$_k$ and
  I-EAB$_k$ schemes are stable under perturbations. Moreover, if $a$ and $b$ are
  $\mathcal{C}^k$ regular on $\R\times E$,
  then the EAB$_k$ and I-EAB$_k$ schemes are consistent of order $k$. Therefore they
  converge with order $k$ in the sense of inequality~\eqref{def:conv}, by
  applying corollary~\ref{cor:conv}.
\end{theorem}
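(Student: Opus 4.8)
The strategy is to reduce everything to Proposition~\ref{prop:stab} and Corollary~\ref{cor:conv}: it suffices to verify the two stability conditions~\eqref{eq:cond-stab-2} and~\eqref{eq:cond-stab-1} for the scheme generator $S$ of each method, and then to establish consistency of order $k$. The key analytic tool throughout is that the functions $\phi_j$ are entire, so that on any bounded set of matrix arguments $z = a_n h$ with $|a_n|\le M_a$ and $h$ bounded, one has uniform bounds $|\phi_j(a_n h)|\le c_j$ and, more importantly, Lipschitz-type estimates $|\phi_j(a_nh) - \phi_j(a'_nh)|\le c'_j h\,|a_n - a'_n|$ (and similarly $|\e^{a_nh} - \e^{a'_nh}|\le c\,h\,|a_n-a'_n|$) obtained from the integral/series representation of $\phi_j$ — crucially these are Lipschitz bounds in the \emph{matrix} $a_n$, not derivatives, which is exactly the point made in the introduction about $\e^{M+\epsilon N}$.

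For the EAB$_k$ scheme, condition~\eqref{eq:cond-stab-2} follows by writing $s(t_n,Y,h) = \e^{a_nh}y_n + h\sum_j \phi_j(a_nh)\gamma_{nj}$, bounding $|\e^{a_nh}|\le \e^{M_ah}\le 1+Ch$, and controlling each $\gamma_{nj}$: since $\gamma_{nj}$ is a fixed linear combination of the values $g_{ni} = g(t_{n-i},y_{n-i})$ with $g(t,y) = b(t,y) + (a(t,y)-a_n)y$, and since $a,b$ are Lipschitz hence of at most linear growth, one gets $|g_{ni}|\le C(1+|Y|_\infty)$, whence $|s(t_n,Y,h)|\le \e^{M_ah}|y_n| + Ch(1+|Y|_\infty)$, which gives~\eqref{eq:cond-stab-2} after passing to the max norm on $E^k$ (the shift components $y_2,\dots,y_k$ are harmless). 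For~\eqref{eq:cond-stab-1} one compares $s(t_n,Y,h)$ and $s(t_n,Z,h)$: the difference splits into (i) terms where $\e^{a_nh}$ or $\phi_j(a_nh)$ is applied to $y_i - z_i$, contributing $|Y-Z|_\infty(1+Ch)$; (ii) terms where the \emph{argument} $a_n$ differs between $Y$ and $Z$ — here one uses the matrix-Lipschitz bounds above together with $|a_n(Y)-a_n(Z)|\le K_a|Y-Z|_\infty$, and each such term carries an extra factor $h$, contributing $Ch|Y-Z|_\infty$; (iii) the dependence of $g_{ni}$ on $Y$ vs $Z$, which via the Lipschitz constants $K_a,K_b$ and the linear-growth bound gives $Ch(1+|Y|_\infty)|Y-Z|_\infty$. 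Collecting, $|s(t_n,Y,h)-s(t_n,Z,h)|\le |Y-Z|_\infty(1+C_2h(1+|Y|_\infty))$, which is~\eqref{eq:cond-stab-1}.

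For the I-EAB$_k$ scheme the same program is carried out on the formula~\eqref{sch_IEAB}. The new ingredient is the exponential of the primitive $\e^{\tilde A_n(h)}$ with $\tilde A_n(h) = \int_0^h \tilde a_n(t_n+\sigma)\,d\sigma$; since $\tilde a_n$ is the Lagrange interpolant of the values $a(t_{n-j},y_{n-j})$, it is bounded by $M_a$ (up to the Lebesgue constant of the fixed nodes, a $k$-dependent constant), so $|\tilde A_n(h)|\le Ch$ and $|\e^{\tilde A_n(h)}|\le \e^{Ch}$; moreover $\tilde A_n$ depends on $Y$ through the interpolated $a$-values, Lipschitz with constant $CK_a|Y-Z|_\infty$ and carrying a factor $h$ from the integration, so one again gets matrix-Lipschitz bounds $|\e^{\tilde A_n(h)}(Y) - \e^{\tilde A_n(h)}(Z)|\le Ch|Y-Z|_\infty$. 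The inner integral $\int_0^h \e^{-\tilde A_n(\tau)}\tilde b_n(t_n+\tau)\,d\tau$ is then estimated the same way, with $\tilde b_n$ bounded and Lipschitz in $Y$ via $K_b$ and linear growth. This gives~\eqref{eq:cond-stab-2}--\eqref{eq:cond-stab-1} for I-EAB$_k$, hence 0-stability of both schemes by Proposition~\ref{prop:stab}.

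Finally, consistency of order $k$: this is where I would follow~\cite{EAB_M_O}. Starting from the exact variation-of-constants formula~\eqref{eq:variation-const} (resp.\ its primitive version for I-EAB), the local error~\eqref{def:local-error} is the difference between the exact integral and its computed approximation with $g_n$ (resp.\ $a,b$) replaced by the degree-$(k-1)$ Lagrange interpolant through the \emph{exact} solution values. Under the $\mathcal C^k$ assumption on $a,b$, the function $\tau\mapsto g_n(t_n+\tau, y(t_n+\tau))$ (resp.\ $a,b$ along the solution) is $\mathcal C^k$, so the interpolation error is $O(h^k)$ uniformly, and multiplying by $\e^{a_n(h-\tau)}$ (bounded) and integrating over $[0,h]$ yields a local error $O(h^{k+1})$; one must also check that the first $k-1$ shift components of $\epsilon(t_n,h)$ vanish, which is immediate. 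The main obstacle is purely the bookkeeping of step~(ii)/(iii) in the Lipschitz estimate — ensuring that every occurrence of a differing matrix argument genuinely produces the compensating factor $h$ (so that the $1+C_2h(\cdots)$ form is respected and not a bare $1+C$), and handling the I-EAB case where that argument sits inside a $\tau$-integral; none of this is deep, but it is the part that must be done carefully rather than invoked.
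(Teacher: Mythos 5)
Your proposal is correct and reaches the same two stability conditions and the same consistency estimate, but the route you take to verify \eqref{eq:cond-stab-2}--\eqref{eq:cond-stab-1} is genuinely different from the paper's. You work directly on the closed-form update \eqref{EAB-2} and supply the missing ingredient yourself: local Lipschitz continuity of the entire matrix functions $z\mapsto \e^z$ and $z\mapsto\phi_j(z)$, e.g.\ via the identity $\e^{A}-\e^{B}=\int_0^1 \e^{sA}(A-B)\e^{(1-s)B}\,\mathrm{d}s$ or the power series, which yields $|\phi_j(a h)-\phi_j(a' h)|\le C h\,|a-a'|$ on bounded sets with no commutativity assumption. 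That is exactly the right answer to the obstruction raised in the introduction (one never needs to \emph{differentiate} $\e^{a(t,y)h}$ in $y$, only to bound differences), and with it your bookkeeping of the three sources of discrepancy does produce the required $1+C_2h(1+|Y|_\infty)$ form, since every term where the matrix argument differs carries the compensating factor $h$. The paper instead sidesteps matrix-function estimates entirely: it rewrites each scheme generator as the time-$h$ value of a linear ODE ($z'=\at z+\bt$ for I-EAB$_k$, $z'=a(t,y_k)z+\gt$ for EAB$_k$, equations \eqref{IEABk-generator}--\eqref{EABk-generator}) and applies the Gronwall inequality \eqref{lem:gronwall} to $z$ and to the difference $\delta=z_1-z_2$. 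The Gronwall route buys a uniform treatment of EAB$_k$ and I-EAB$_k$ (the latter has no closed form in terms of $\phi_j$, so in your approach you must estimate the integral in \eqref{sch_IEAB} by hand, which amounts to re-deriving a Gronwall-type bound); your route buys a more explicit, formula-level argument at the cost of one extra lemma on matrix functions. Your consistency argument coincides with the paper's (interpolation error \eqref{interp-error} fed into the variation-of-constants formula, and for I-EAB$_k$ again a Gronwall bound on $\delta=y-z$), so the final appeal to Proposition~\ref{prop:stab} and Corollary~\ref{cor:conv} is identical.
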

The stability and consistency are proved in sections~\ref{sec:eabk-stability}
and \ref{sec:eabk-consistency}, respectively. Preliminary tools and definitions
are provided in the sections \ref{sec:interp} and \ref{sec:sch-generator}.
\subsection{Interpolation results}
\label{sec:interp}
Consider a function $x:~\R\times E \longrightarrow \R$ and a triplet $(t,Y,h)\in \R\times E^k\times \R^+$ with $Y=(y_1,\dots,y_k)$. 
We set to $\xt$ the polynomial with degree less than $k-1$ so that
\begin{displaymath}
  \xt (t-ih) =     
  x(t-ih,y_{k-i}),\quad
  0\le i\le k-1.
\end{displaymath}
We then extend component-wise this definition to vector valued or matrix valued functions $x$ (\textit{e.g.} the functions $a$, $b$ or $f$).
\begin{lemma}
  \label{lem:pol-interp}
  There exists an (interpolation) constant $L_i>0$ such that, for any 
  function $x:\R\times E \mapsto \R$, 
  \begin{align}
    \label{interp-norm}
    \sup_{t\le \tau \le t+h}
    |\xt(\tau)| &\le L_i 
    \max_{0\le i\le k-1} \left| x(t-ih,y_{k-i}) \right|,
    \\
    \label{interp-norm-2}
    \sup_{t\le \tau \le t+h}
    |\tilde{x}_{_{[t,Y_1,h]}}
    -\tilde{x}_{_{[t,Y_2,h]}}
    | &\le L_i 
    \max_{0\le i\le k-1} \left| x(t-ih,y_{1,k-i}) 
      - x(t-ih,y_{2,k-i}) 
    \right|,
  \end{align}
  Consider a function $y:~[0,T]\rightarrow E$ and assume that $x$ and $y$ have a $\mathcal{C}^k$ regularity. Then, when $[t-(k-1)h,t+h]\subset [0,T]$,
  \begin{equation}
    \label{interp-error}
    \sup_{t\le \tau\le t+h} \left|x(t,y(t)) -
      \tilde{x}_{_{[t, Y(t),h]}}\right|_E \le \sup_{[0,T]} \left|
      \diff[k]{f}{t}\left(f(t,y(t))\right) \right| h^k,
  \end{equation}
  with $Y(t)$ defined in (\ref{def:Y(t)}).
  \\
  For a vector valued function in $\R^d$ the previous inequalities hold when considering the max norm on $\R^d$.
  For a  matrix valued function  in $\R^d\times \R^d$ this is also true for the operator norm on $\R^d\times \R^d$ when multiplying the constants in the inequalities (\ref{interp-norm}), (\ref{interp-norm-2}) and (\ref{interp-error}) by $d$.
\end{lemma}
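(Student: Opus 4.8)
The three estimates are all consequences of writing the interpolating polynomial explicitly in the Lagrange basis. Let $\ell_0,\dots,\ell_{k-1}$ be the Lagrange cardinal polynomials on the nodes $t,t-h,\dots,t-(k-1)h$, normalized so that $\ell_i(t-jh)=\delta_{ij}$. By a change of variables $\tau = t - sh$ these are, up to the reparametrization, the fixed Lagrange polynomials on the integer nodes $0,-1,\dots,-(k-1)$, and in particular \emph{do not depend on $t$ or $h$}. Set
\begin{displaymath}
  L_i := \sup_{0\le s\le 1}\ \sum_{i=0}^{k-1} \bigl| \ell_i(t-sh) \bigr|
        = \sup_{0\le s\le 1}\ \sum_{i=0}^{k-1} |\hat\ell_i(-s)|,
\end{displaymath}
a finite constant depending only on $k$. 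First I would prove \eqref{interp-norm}: since $\xt(\tau)=\sum_{i=0}^{k-1} x(t-ih,y_{k-i})\,\ell_i(\tau)$, the triangle inequality gives $|\xt(\tau)|\le L_i\max_i|x(t-ih,y_{k-i})|$ for $t\le\tau\le t+h$, which is exactly the claim. The bound \eqref{interp-norm-2} is identical: $\tilde x_{[t,Y_1,h]}-\tilde x_{[t,Y_2,h]}$ is the interpolating polynomial of the node values $x(t-ih,y_{1,k-i})-x(t-ih,y_{2,k-i})$, so the same computation applies verbatim.

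For the interpolation error \eqref{interp-error} I would invoke the standard remainder formula for Lagrange interpolation of a $\mathcal{C}^k$ function: if $g$ is $\mathcal C^k$ on an interval containing the nodes and the evaluation point, then for each $\tau$ there is $\zeta$ in that interval with
\begin{displaymath}
  g(\tau) - \sum_{i=0}^{k-1} g(t-ih)\,\ell_i(\tau)
   = \frac{g^{(k)}(\zeta)}{k!}\ \prod_{i=0}^{k-1}\bigl(\tau - (t-ih)\bigr).
\end{displaymath}
Applying this to $g(\tau)=x(\tau,y(\tau))$, whose $k$-th derivative along the trajectory is $\diff[k]{}{t}\bigl(x(t,y(t))\bigr)$ — and which, when $x=f$ and $y$ solves \eqref{eq:F1}, is $\diff[k]{f}{t}(f(t,y(t)))$ in the paper's notation — and bounding the node product: for $t\le\tau\le t+h$ each factor $|\tau-(t-ih)|\le (k)h$, so $\prod_{i=0}^{k-1}|\tau-(t-ih)|\le k!\,h^k$ (indeed $\le (k-1+1)(k-1)\cdots 1\cdot h^k = k!\,h^k$, absorbing the constant). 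This yields \eqref{interp-error}, provided the nodes and $\tau$ all lie in $[0,T]$, which is guaranteed by the hypothesis $[t-(k-1)h,t+h]\subset[0,T]$.

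Finally the vector- and matrix-valued cases: for a function valued in $\R^d$, interpolation and all the above estimates are applied componentwise, and since the max norm of a vector is the max of the absolute values of its components, inequalities \eqref{interp-norm}--\eqref{interp-error} carry over with the \emph{same} constant. For a matrix-valued function in $\R^d\times\R^d$ the operator norm associated to the max norm is the maximum absolute row sum, so a scalar bound valid entrywise produces the operator-norm bound after multiplication by $d$ (the number of terms in a row), which accounts for the stated factor $d$. The only mildly delicate point — the main thing to get right rather than a genuine obstacle — is the identification of $\diff[k]{}{t}\bigl(x(t,y(t))\bigr)$ with the quantity written on the right of \eqref{interp-error} and the verification that the $\mathcal C^k$ regularity of both $x$ and $y$ suffices for the remainder formula to apply along the trajectory; everything else is a direct application of classical Lagrange interpolation bounds with $t$- and $h$-independent cardinal functions.
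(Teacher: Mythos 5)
Your proposal is correct and follows essentially the same route as the paper: rescale to fixed interpolation nodes so that the constant is independent of $t$ and $h$, use linearity of interpolation for \eqref{interp-norm} and \eqref{interp-norm-2}, apply the classical Lagrange remainder with the product bound $\prod_{i=0}^{k-1}|\tau-(t-ih)|\le k!\,h^k$ for \eqref{interp-error}, and extend componentwise with the factor $d$ for the operator norm. The only cosmetic difference is that you exhibit $L_i$ explicitly as the Lebesgue constant $\sup\sum_i|\ell_i|$, whereas the paper obtains it abstractly as the continuity constant of the (finite-dimensional) linear interpolation operator $\L$.
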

\begin{proof} 
  The space $\Pk$ of the polynomials $p$ with
  degree less than $k-1$ is equipped  with the norm $ \sup_{[0,1]} |p(\tau)|$.
  On $\R^k$ is considered the max norm.
  To $R=(r_1,\ldots,r_k)\in \R^k$ is associated $\L R\in\Pk$ uniquely determined by $\L R(-i)= r_{k-1}$ for $i=0\ldots k-1$.
  The mapping $\L$ is linear. 
  Let $C_{\L}$ be its continuity constant (it only depends on $k$). 
  \\
  We fix $x:~\R\times  E\rightarrow \R$ and $(t,h)\in \R\times \R^+$.
  Consider the vector $Y=(y_1,\ldots,y_k)\in E^k$ and set the vector 
  $R\in\R^k$ by $R=\left (x(t-(k-1)h, y_1),\ldots,x(t,y_k)\right)$.
  We have $\xt(t+\tau) = \L R(\tau / h)$.
  The relation (\ref{interp-norm}) exactly is the continuity of 
  $\L$ and $L_i=C_{\L}$.
  \\
  Consider $Y_1$, $Y_2\in E^k$ and the associated vectors $R_1$, $R_2$ as above.
  We have 
  $(x_{_{[t,Y_1,h]}} - x_{_{[t,Y_2,h]}})(t+\tau) = \L\left (R_1-R_2\right )(\tau / h)$. Again, relation (\ref{interp-norm-2}) comes from the continuity of $\L$.
  \\
  Let $\phi:~\R\rightarrow \R$  be a $\mathcal{C}^k$ function, 
  its interpolation polynomial $\tilde{\phi}$ at the points $t-(k-1)h,\ldots, t$ is considered.
  A classical result on Lagrange interpolation applied to $\phi$ states that, 
  for all $\tau\in(t,t+h)$, there exists
  $\xi \in (t-(k-1)h,t+h)$, such that
  $\left (\phi - \tilde{\phi}\right )(\tau) = \frac1{k!}
  \phi^{(k)}(\xi) \pi(\tau)$,
  where $\pi(\tau) = \prod_{i=1}^{k} (\tau-t_{i})$. 
  For
  $\tau\in(t,t+h)$, we have $|\pi(\tau)| \le k!\, h^k$.
  This proves (\ref{interp-error}) by setting $\phi(t) = x(t,y(t))$.
  \\
  For a vector valued function $x:~\R\times  E\rightarrow \R^d$, these three inequalities holds by processing component-wise and when considering the max norm on $\R^d$.
  \\
  For a matrix valued function $x:~\R\times  E\rightarrow \R^d\times \R^d$, the extension is direct when considering the max norm $\vert \cdot\vert _\infty$ on $\R^d\times \R^d$ (\textit{i.e.} the max norm on the matrix entries). 
  The operator norm $\vert \cdot\vert$ is retrieved with the inequality $\vert \cdot\vert _\infty \le d \vert \cdot\vert$   
\end{proof}
\subsection{Scheme generators}
\label{sec:sch-generator}
Let us consider $(t,Y,h)\in \R\times E^k\times \R^+$ with $Y=(y_1,\dots,y_k)$.
With the notations used in the previous subsection, we introduce the interpolations 
$\at$ and $\bt$
for the functions $a$ and $b$ in \eqref{eq:F2}. Thanks to the definition
\eqref{sch_IEAB}, the I-EAB$_k$ scheme generator is defined by
\begin{equation}
  \label{IEABk-generator}
  s(t,Y,h) = z(t+h)
  \quad \text{with}\quad 
  \diff{z}{\tau} 
  = \at(\tau) z(\tau) + \bt(\tau),\quad z(t) = y_k,
\end{equation}  
We introduce the polynomial $\gt$ with degree less than $k-1$ that satisfies,
\begin{displaymath}
  \gt (t-ih)= 
  f(t-ih, y_{k-i}) - a(t,y_k)y_{k-i},\quad   i=0\ldots k-1.
\end{displaymath}
The function $\tilde{g}_n$ in \eqref{def:gn} is given by
$\tilde{g}_n = \bar{g}_{_{[t_n,Y_n,h]}}$ with $Y_n=(y_{n-k+1}\dots,y_n)$.  With
the definition \eqref{EAB-1}, the EAB$_k$ scheme generator is defined by
\begin{equation}
  \label{EABk-generator}
  s(t,Y,h) = z(t+h)
  \quad \text{with}\quad 
  \diff{z}{\tau} 
  = a(t,y_k) z(\tau) + \gt(\tau),\quad z(t) = y_k,
\end{equation}
We will use the fact that $\gt$ is the interpolator of the function
$\gty:~(\tau,\xi) \rightarrow f(\tau,\xi) - a(t,y_k)\xi$, precisely:
\begin{displaymath}
  \gt=\widetilde{g_{_{t,y_k}}}_{_{[t,Y,h]}}.
\end{displaymath}

These scheme generator definitions will allow us to use the following Gronwall's
inequality (see \cite[Lemma 196, p.150]{gronwall-monograph}).
\begin{lemma}%\label{lem:gronwall}
  Suppose that $z(t)$ is a $C^1$ function on $E$. If there exist $\alpha>0$ and
  $\beta>0$ such that $|z'(t)|_E \le \alpha |t|+ \beta$ for all
  $t\in[t_0,t_0+h]$, then:
  \begin{equation}
    \label{lem:gronwall}
    |z(t)|_E \le |z(t_0)|_E \e^{\alpha h} + \beta h \e^{\alpha h}
    \quad \text{for} \quad 
    t\in[t_0,t_0+h].
  \end{equation}
\end{lemma}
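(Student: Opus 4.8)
The conclusion is exponential in $h$, so I read the hypothesis in its intended differential–Gronwall form, namely $|z'(t)|_E \le \alpha\,|z(t)|_E + \beta$ on $[t_0,t_0+h]$; a literal reading with $\alpha|t|$ would produce only a polynomial bound and could never generate the factor $\e^{\alpha h}$ appearing in \eqref{lem:gronwall}. The plan is to reduce the statement to the \emph{integral} form of Gronwall's inequality, which is exactly the cited result \cite{gronwall-monograph}, and then simplify the resulting estimate to the announced form.

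First I would pass from the differential inequality to an integral one. Since $z$ is $C^1$ on $[t_0,t_0+h]$, the fundamental theorem of calculus gives $z(t) = z(t_0) + \int_{t_0}^t z'(s)\ds$; taking the $E$-norm and inserting the hypothesis yields, with $u(t) := |z(t)|_E$,
\begin{displaymath}
  u(t) \le |z(t_0)|_E + \int_{t_0}^t |z'(s)|_E \ds \le |z(t_0)|_E + \beta\,(t-t_0) + \alpha \int_{t_0}^t u(s)\ds .
\end{displaymath}
This keeps the argument elementary and, crucially, avoids differentiating $t\mapsto|z(t)|_E$, which is only Lipschitz and not $C^1$ in general.

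Next I would set $c(t) := |z(t_0)|_E + \beta\,(t-t_0)$ and observe that $c$ is nondecreasing on $[t_0,t_0+h]$, so that the previous line reads $u(t)\le c(t) + \alpha\int_{t_0}^t u(s)\ds$. This is precisely the hypothesis of Gronwall's integral inequality (Lemma~196 of \cite{gronwall-monograph}), whose conclusion for a nondecreasing $c$ is $u(t)\le c(t)\,\e^{\alpha(t-t_0)}$. Substituting $c$ gives $u(t) \le \big(|z(t_0)|_E + \beta(t-t_0)\big)\,\e^{\alpha(t-t_0)}$.

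Finally I would use the monotonicity of $r\mapsto\e^{\alpha r}$ together with $t-t_0\le h$ to bound $\e^{\alpha(t-t_0)}\le\e^{\alpha h}$ and $\beta(t-t_0)\le\beta h$, which turns the last estimate into $u(t)\le |z(t_0)|_E\,\e^{\alpha h} + \beta h\,\e^{\alpha h}$, i.e.\ exactly \eqref{lem:gronwall}. I do not expect a genuine obstacle here: the only points requiring care are the correct reduction of the differential hypothesis to an integral inequality without differentiating the norm, and the verification that $c$ is nondecreasing so that the nondecreasing-majorant version of the cited Gronwall lemma may be applied.
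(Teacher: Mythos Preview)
Your argument is correct, including the identification of the evident typo in the hypothesis (indeed $\alpha|z(t)|_E$ is what is used throughout Section~\ref{sec:proof-conv}). The paper itself does not prove this lemma but simply cites it from \cite{gronwall-monograph}; your reduction to the integral Gronwall inequality with nondecreasing majorant is the standard derivation of exactly that cited result, so there is nothing to compare.
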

\subsection{Stability}
\label{sec:eabk-stability}
With proposition \ref{prop:stab} we have to prove the stability conditions
\eqref{eq:cond-stab-2} and \eqref{eq:cond-stab-1}. Note that it is sufficient to
prove these relations for $h\le h_0$ for some constant $h_0>0$ since the limit
$h\rightarrow 0$ is of interest here.
\subsubsection{Case of the I-EAB$_k$ scheme}
Consider $(t,h)\in \R \times \R^+$ and a vector $Y=(y_1,\dots,y_k)\in E^k$. We simply
denote $\tilde{a}= \at$ and $\tilde{b}= \bt$. The scheme generator is given by
\eqref{IEABk-generator}. We first have to bound $z(t+h)$ where $z$ is given by
\begin{displaymath}
  z' = \tilde{a}z + \tilde{b},\quad  z(t)=y_k.
\end{displaymath}
On the first hand, with the interpolation bound \eqref{interp-norm},
\begin{displaymath}
  \sup_{t\le \tau \le t+h} | \tilde{a}(\tau) |
  \le L_i \max_{0\le i\le k-1}
  |a(t-ih, y_{k-1})|\le L_i M_a.
\end{displaymath}
On the second hand, the function $b(t,y)$ is globally Lipschitz in $y$ and thus
can be bounded as
$|b(t,y)|_E \le |b(t,0)|_E + K_b|y |_E \le
R_b(|y|_E+1)$, for $0\le t\le T$ and for some constant $R_b$ only depending on
$K_b$ and on $T$. Then with the bound \eqref{interp-norm},
\begin{displaymath}
  \sup_{t\le \tau \le t+h} | \tilde{b}(\tau) |_E
  \le L_i \max_{0\le i\le k-1}
  R_b\left (|y_{k-i}|_E +1\right )\le L_i R_b(|Y|_\infty +1).
\end{displaymath}
By applying the Gronwall inequality \eqref{lem:gronwall}, for $0\le \tau\le h$,
\begin{displaymath}
  |z(t+\tau)|_E \le
  \e^{L_i M_a h}\left (
    |y_k|_E + h L_i R_b(|Y|_\infty +1)
  \right ).
\end{displaymath}
Thus, there exists a constant $C_1$ only depending on $L_i$, $M_a$, $R_b$ and
$h_0$ so that, for $0\le \tau\le h$ and for $0\le h\le h_0$,
\begin{equation}
  \label{eq:up-bound-z}
  |z(t+\tau)|_E \le C_1h + |Y|_\infty (1+C_1h).
\end{equation}
This gives the condition \eqref{eq:cond-stab-2} taking $\tau = h$.

For $j$=1, 2 We consider $Y_j=(y_{j,1},\ldots y_{j,k})\in E^k$ and denote
$\tilde{a}_j=\tilde{a}_{_{[t,Y_j,h]}}$ and
$\tilde{b}_j=\tilde{b}_{_{[t,Y_j,h]}}$ the
interpolations of the functions $a$ and $b$. 
With the definition~\eqref{IEABk-generator} of the I-EAB$_k$ scheme
we have:
$\left |s(t,Y_1,h)-s(t,Y_2,h)\right |_E = |\delta(t+h)|$ with
$\delta = z_1 - z_2$ and with $z_j$ given by
\begin{displaymath}
  z_j' = \tilde{a_j}z_j + \tilde{b_j},\quad  z_j(t)=y_{j,k}.
\end{displaymath}
We then have,
\begin{displaymath}
  \delta' = \tilde{a}_1 \delta + r
  ,\quad 
  r:=(\tilde{a}_1-\tilde{a}_2) z_2 + (\tilde{b}_1-\tilde{b}_2)  .
\end{displaymath}
Using that $a$ and $b$ are Lipschitz in $y$ and with the interpolation bound \eqref{interp-norm-2},
\begin{gather*}
  \sup_{t\le \tau \le t+h} |\tilde{b}_1(\tau)-\tilde{b}_2(\tau) |_E
  \le L_i K_b |Y_1-Y_2|_\infty,\\
  \sup_{t\le \tau \le t+h} |\tilde{a}_1(\tau)-\tilde{a}_2(\tau) |\le
  L_i K_a |Y_1-Y_2|_\infty.
\end{gather*}
With the upper bound \eqref{eq:up-bound-z}, for $t\le \tau\le t+h\le T$ and for
$h\le h_0$,
\begin{align}
  \notag
  | r(\tau) |_E
  &\le
  L_i | Y_1-Y_2|_\infty 
  \left (
    K_b + K_a \left( C_1h + |Y_2|_\infty (1+C_1h)\right )
  \right )
  \\     \label{bound-r(t)}
  &\le
  C | Y_1-Y_2|_\infty \left ( 1 + |Y_2|_\infty \right )
  ,
\end{align}
For a constant $C$ only depending on $h_0$, $K_a$, $K_b$, $L_i$ and $C_1$. We
finally apply the Gronwall inequality \eqref{lem:gronwall}. It yields,
\begin{align*}
  |\delta(t+h)|&\le \e^{L_i M_a h}\left(
    |y_{1,k}-y_{2,k}|_E
    +
    C h |Y_1 - Y_2|_\infty  \left(
      1 + |Y_2|_\infty 
    \right)
  \right)
  \\&\le
  |Y_1 - Y_2|_\infty \e^{L_i M_a h}
  \left( 1 + Ch
    \left(
      1 + |Y_2|_\infty 
    \right) 
  \right),
\end{align*}
This implies the second stability condition \eqref{eq:cond-stab-1} for
$h\le h_0$.
\subsubsection{Case of the EAB$_k$ scheme}
Consider $(t,h)\in \R \times \R^+$ and a vector $Y=(y_1,\dots,y_k)\in E^k$. 
Following the definition of the EAB$_k${} scheme given in
section~\ref{sec:EABk}, we denote $\bar{a}=a(t,y_k)$, $g$ the
function $g(\tau,\xi)=f(\tau,\xi)-\bar{a}\xi$ and $\bar{g}= \gt$. 
We have that $\bar{g}$ is the
Lagrange interpolation polynomial of $g$, specifically
$\bar{g} = \tilde{g}_{_{[t,Y,h]}}$.
The
scheme generator is then given by equation \eqref{EABk-generator}: $s(t,Y,h)=z(t+h)$ with,
\begin{displaymath}
  z' = \bar{a}z + \bar{g},\quad  z(t)=y_k.
\end{displaymath}
We first have the bound $|\bar{a}|\le M_a$. As in the previous
subsection, $f$ being globally Lipschitz in $y$, one can find a constant $R_f$
so that for $0\le t\le T$, $|f(t,y)|_E \le R_f(1+|y|_E)$. It follows
that $|g(\tau,\xi)|_E\le R_f(|y|_E+1) + M_a|y|_E \le C_0/L_i(|y|_E+1)$, with
$C_0/L_i=R_f+M_a$. Therefore, with the interpolation bound \eqref{interp-norm}:
\begin{displaymath}
  \sup_{t\le \tau\le t+h} |\bar{g}(\tau)|_E \le
  L_i \max_{0\le i\le k-1}
  |g(t-ih, y_{k-i})|_E \le C_0(|y|_E+1).
\end{displaymath}
By applying the Gronwall inequality \eqref{lem:gronwall}, for $0\le \tau\le h$,
\begin{displaymath}
  |z(t+\tau)|_E \le
  \e^{M_a h}\left (
    |y_k|_E + h C(|Y|_\infty +1)
  \right ).
\end{displaymath}
Thus, there exists a constant $C_1$ only depending on $M_a$ and $C_0$ so that,
for $0\le \tau\le h$ and for $0\le h\le h_0$, \eqref{eq:up-bound-z} holds.  This
gives the condition \eqref{eq:cond-stab-2}.

We now consider $Y_1$, $Y_2\in E^k$ for $j=$1, 2 and denote as previously,
$\bar{a}_j=a(t,y_{j,k})$, $g_j$ the function
$g_j(\tau,\xi)=f(\tau,\xi)-\bar{a}_j\xi$ and
$\bar{g}_j= \bar{g}_{_{[t,Y_j,h]}}$. With \eqref{EABk-generator},
$|s(t,Y_1,h)-s(t,Y_2,h)|_E = |\delta(t+h)|$ with $\delta = z_1 - z_2$ and with
$z_j$ given by
\begin{displaymath}
  z_j' = \bar{a}_j z_j + \bar{g_j},\quad  z_j(t)=y_{j,k}.
\end{displaymath}
The function $\delta$ satisfies the ODE
\begin{displaymath}
  \delta' = \bar{a}_1 \delta + r(t),\quad r(t):=
  (\bar{a}_1-\bar{a}_2) z_2 + (\bar{g}_1-\bar{g}_2)  .
\end{displaymath}
We have
\begin{align*}
  |g_1(\tau, y_{1,i}) - g_2(\tau, y_{2,i})|_E 
  &\le |f(\tau, y_{1,i}) - f(\tau,
  y_{2,i})|_E + |\bar{a}_1||y_{1,i}-y_{2,i}|_E + |\bar{a}_1 -
  \bar{a}_2||y_{2,i}|_E 
  \\ 
  &\le |Y_1-Y_2|_\infty \left ( K_f + M_a +
    K_a|Y_2|_\infty \right ).
\end{align*}
Thus, with equation \eqref{interp-norm-2}, for some $C>0$,
\begin{align*}
  \sup_{t\le \tau\le t+h} |\bar{g}_1(\tau)- \bar{g}_2(\tau)|_E &\le
  L_i \max_{0\le i\le k-1}
  |g_1(t-ih, y_{1,k-i}) - g_2(t-ih, y_{2,k-i})|_E 
  \\ &\le
  C |Y_1-Y_2|_\infty \left (
    1 + |Y_2|_\infty 
  \right )
  .
\end{align*}
Meanwhile we have the upper bound \eqref{eq:up-bound-z} that gives, for
$t\le\tau\le t+h\le T$ and $h\le h_0$,
\begin{align*}
  \left |(\bar{a}_1-\bar{a}_2) z_2 \right |_E
  &\le 
  M_a \left | Y_1-Y_2\right |_\infty 
  | z_2(\tau)|_E 
  \\&\le
  M_a \left | Y_1-Y_2\right |_\infty 
  \left (C_1h + |Y_2|_\infty (1+C_1h)\right ).
\end{align*}
Altogether, we retrieve the upper bound \eqref{bound-r(t)} on $r(t)$. We can end
the proof as for the I-EAB$_k$ case and conclude that the stability condition
\eqref{eq:cond-stab-1} holds for the EAB$_k$ scheme.
\subsection{Consistency}
\label{sec:eabk-consistency}
A solution $y(t)$ to problem \eqref{eq:F1} on $[0,T]$ is fixed. The functions
$a$ and $b$ in \eqref{eq:F2} are assumed to be $\mathcal{}^k$ regular so that $y$ is
$\mathcal{C}^{k+1}$ regular.
\subsubsection{Case of the EAB$_k$ scheme}
The local error \eqref{def:local-error} for the EAB$_k$ scheme has been analyzed in
\cite{EAB_M_O}. That analysis remains valid for the case presented here and we
only briefly recall it. The local error is obtained by subtracting \eqref{EAB-1}
to \eqref{eq:variation-const}:
\begin{align*}
  |\epsilon(t_n,h)|_E
  &\le \int_0^h
  \e^{M_a (h-\tau)} \left |
    g_n(t+\tau,y(t+\tau)) - \tilde{g}_n(t+\tau)
  \right |_E \d{\tau}
  \\&\le
  h\phi_1(M_a h) h^{k}
  \sup_{[0,T]} \left|
    \diff[k]{}{t}\left(g_n(t,y(t))\right) \right|,
\end{align*}
thanks to the interpolation error estimate \eqref{interp-error}. Finally, with
the upper bound $M_a$ on $a_n$, the last term can be bounded independently on
$n$ for $h\le h_0$.
\subsubsection{Case of the I-EAB$_k$ scheme}
We denote $\tilde{a}=\tilde{a}_{_{[t_n,Y(t_n),h]}}$ and
$\tilde{b}=\tilde{b}_{_{[t_n,Y(t_n),h]}}$. The local error
\eqref{def:local-error} for the I-EAB$_k$ scheme satisfies
$\epsilon(t_n,h) = |\delta(t_{n+1})|_E$ with $\delta = y - z$ and
where $z$ is defined by
\begin{displaymath}
  z' = \tilde{a} z + \tilde{b},\quad z(t_n)=y(t_n),
\end{displaymath}
so that with \eqref{IEABk-generator} we have $s(t_n,Y(t_n),h)=z(t_{n+1})$.  The
function $\delta$ is defined with $\delta(t_n)=0$ and,
\begin{displaymath}
  \delta' = \tilde{a} \delta + r
  ,\quad 
  r(\tau):=(a(\tau,y(\tau))-\tilde{a}(\tau)) y(\tau) 
  +        (b(\tau,y(\tau))-\tilde{b}(\tau))  
  .
\end{displaymath}
The following constants only depend on the considered exact solution $y$, on the
functions $a$ and $b$ in problem \eqref{eq:F2} and on $T$,
\begin{displaymath}
  C_y=\sup_{[0,T]}|y|_E,\quad 
  C_{a,y} = \sup_{[0,T]} \left |
    \diff[k]{}{t} a(t,y(t))
  \right |,\quad 
  C_{b,y} = \sup_{[0,T]} \left |
    \diff[k]{}{t} b(t,y(t))
  \right |.
\end{displaymath}
With the interpolation bound  \eqref{interp-error}, $|r(\tau)|_E \le C h^k$ on
$[t_n,t_{n+1}]$ with $C=C_{a,y}C_y+C_{b,y}$. It has already been showed in
section \ref{sec:eabk-stability} that
$\sup_{[t_n,t_{n+1}]} |\tilde{a}(\tau)|\le L_i M_a$. Therefore, with
the Gronwall inequality \eqref{lem:gronwall},
\begin{displaymath}
  \epsilon(t_n,h) = 
  | \delta(t_{n+1})|_E
  \le
  \e^{L_i M_a h}  h C h^K.
\end{displaymath}
Thus the EAB$_k$ scheme is consistent of order $k$.
\section{Dahlquist stability}
\label{sec:stab-domain}
\subsection{Background}
\begin{figure}[h!]
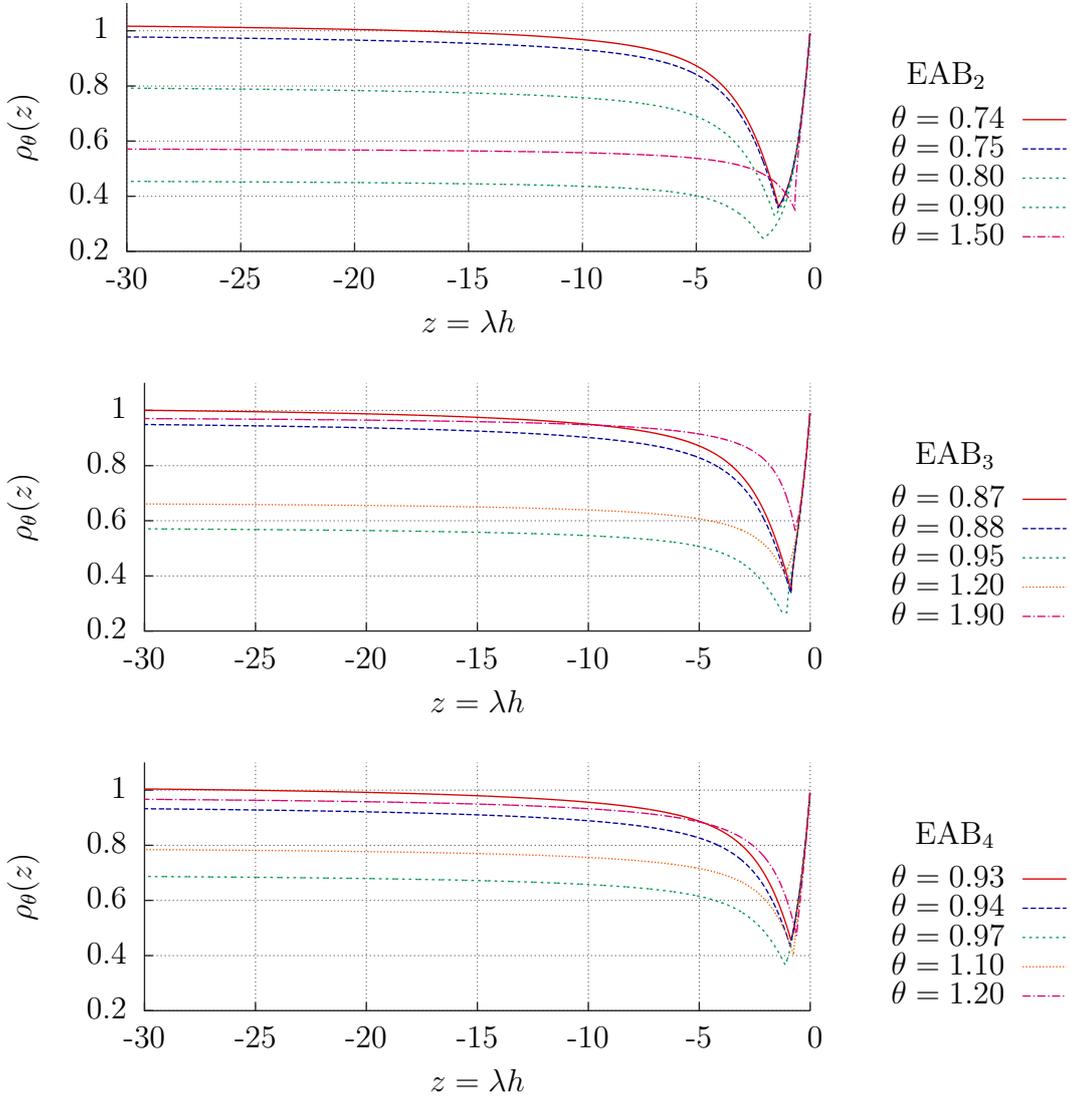

  \centering
  \input{A0_EAB2.tex} \\
  \input{A0_EAB3.tex} \\
  \input{A0_EAB4.tex}
  \caption{
    Stability function $\rho_\theta(z)$ for $z \in \R^-$, for various values of $\theta$ 
    and for the three schemes EAB$_2$, EAB$_3$ and EAB$_4$.
  }
  \label{fig:A0-stab}
\end{figure}
The classical framework for the Dahlquist stability analysis is to set $f(t,y)$
in problem \eqref{eq:F1} to $f=\lambda y$. For linear multistep methods, see
e.g. \cite{Hairer-ODE-II}, the numerical solutions satisfy
$ \left|y_{n+1} / y_n \right|\le \rho(\lambda h), $ where
$\rho:~ \C \rightarrow \R^+$.
The function $\rho$ is the stability function. 
Its definition is detailed below.
 The stability domain is defined by
$ D = \left\{ z\in\C,\quad \rho(z) <1 \right\} $. The scheme is said to be:
\begin{itemize}
\item $A$ stable if $\C^- \subset D$,
\item $A(\alpha)$ stable if $D$ contains the cone with axis $\R^-$ and with half
  angle $\alpha$,
\item $A(0)$ stable if $\R^- \subset D$,
\item stiff stable if $D$ contains a half plane $\text{Re}~z<x\in \R^-$.
\end{itemize}
For exponential integrators, when setting $a(t,y)=\lambda$ in the reformulation
\eqref{eq:F2} of problem \eqref{eq:F1}, the scheme is exact, and therefore also $A$
stable. Such an equality does not hold in general. Then for exponential
integrators the Dahlquist stability analysis has to incorporate the relationship
between the stabilization term $a(t,y)$ in \eqref{eq:F2} and the test function
$f=\lambda y$. This is done here by considering the splitting,
\begin{equation}\label{eq:split-f}
  f=\lambda y = ay + b,\quad 
  a = \theta \lambda \quad {\rm and}\quad 
  b = \lambda(1-\theta)y,
\end{equation}
The parameter $\theta>0$ controls with what accuracy the exact linear part of
$f$ in equation \ref{eq:F1} is captured by $a$ in equation \ref{eq:F2}. In
practice $\theta\ne 1$, though we may hope that $\theta-1$ is small.
In that framework, the stability function and the stability domain depend on
$\theta$, following the idea of Perego and Venezziani in \cite{perego-2009}
(see the remark below).
For a fixed $\theta$, the stability function is $\rho_\theta$ so that
\begin{displaymath}
  \left|\frac{y_{n+1}}{y_n}\right|\le \rho_\theta(\lambda h),
\end{displaymath}
and the stability domain is
$ D_\theta = \left\{ z\in\C,\quad \rho_\theta(z) <1 \right\} $.
  \begin{remark}\label{rem5}
    In \cite{perego-2009} an alternative splitting was introduced:
    \begin{displaymath}
      f=\lambda y = ay + b,\quad 
      a = \dfrac{r}{1+r} \lambda \quad {\rm and}\quad 
      b = \dfrac{1}{1+r} \lambda y,
    \end{displaymath}
    involving the real parameter $r$. We modified that splitting in order to adress more easily the limit where $a=\lambda$ that corresponds to $r\rightarrow \pm\infty $ here whereas it corresponds to $\theta=1$ in our case.
    Nevertheless the results can be translated from one splitting to the other with the change of variable $ r = \theta / (1-\theta)$.
  \end{remark}
The stability function $\rho_\theta(z)$ is defined as in the classical case for linear multistep methods, see \cite[Ch. V.1]{Hairer-ODE-II}.
For a fixed value of $\theta$, the EAB$_k$ scheme applied to the right hand side in equation \eqref{eq:split-f} reads
\begin{displaymath}
  y_{n+1} + c_{\theta,1}(\lambda h) y_n + \ldots + 
  c_{\theta,k}(\lambda h) y_{n-k+1}=0.
\end{displaymath}
The coefficients $c_{\theta,j}$ are explicit (for the EAB$_2$ scheme they are given by
$c_{\theta,1}(z)=-1 - \phi_1(\theta z)z - \phi_2(\theta z)(1-\theta)z$ and $c_{\theta,2}(z)=\phi_2(\theta z)(1-\theta)z$). 
We fix $z\in\C$ and consider the
polynomial $\xi^k + c_{\theta,1}(z)\xi^{k-1} + \ldots +c_{\theta,k}$.
It has $k$ complex roots $\xi^1_\theta(z),\ldots,\xi^k_\theta(z)$.
The stability function is defined as
\begin{displaymath}
  \rho_\theta(z) = \max_{1\le j\le k} |\xi^j_\theta(z)|.
\end{displaymath}
The stability function $\rho_\theta$ is numerically analyzed.
For a given vakue of $\theta$. The software \textit{Maple} is used to compute the 
polynomial roots  on a given grid.
That grid either is a 1D grid of $[-30,0]\subset \R^-$ with size $\Delta x = 0.01$ to study the $A(0)$-stability.
Or it is a 2D cartesian grid in $\C$ to study the stability domain.
In that case the grid size is $\Delta x=0.05$ and the domain is $[-40,2]\times[0,60]$ (roughly 900 000 nodes). The isoline $\rho_\theta(z)=1$ then is constructed with the software \textit{gnuplot}.
\subsection{$A(0)$ stability}
\label{A(0)_stab}
The stability functions $\rho_\theta(z)$ are numerically studied for
$z\in\R^-$. These functions have been plotted for different values of the
parameter $\theta$. The results are depicted on figure \ref{fig:A0-stab}. A
limit $\lim_{-\infty } |\rho_\theta|$ is always observed. The scheme is $A(0)$
stable when this limit is lower than 1. From Figure \ref{fig:A0-stab},
\begin{itemize}
\item EAB$_2$ scheme is $A(0)$ stable if $\theta \ge 0.75$,
\item EAB$_3$ scheme is $A(0)$ stable if $0.88 \le \theta \le 1.9$,
\item EAB$_4$ scheme is $A(0)$ stable if $0.94 \le \theta \le 1.2$.
\end{itemize}
Roughly speaking, $A(0)$ stability holds for the EAB$_k$ scheme if the exact
linear part of $f(t,y)$ in problem \eqref{eq:F1} is approximated with an
accuracy of 75 \%, 85 \% or 95\% for $k=2$, 3 or 4 respectively.
\subsection{$A(\alpha)$ stability}
\begin{figure}[h!]
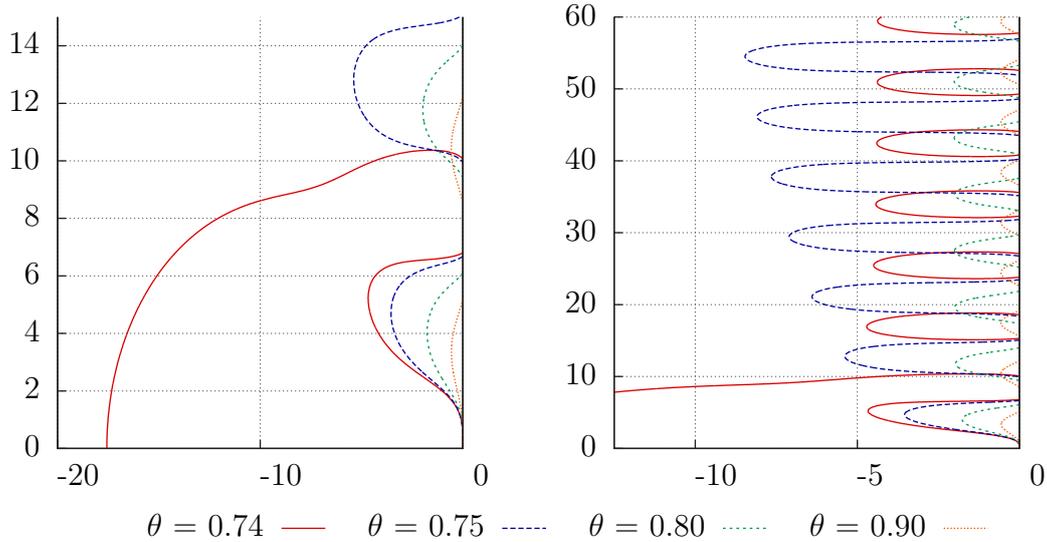

  \centering
  \mbox{
    \input{contour_EAB2-1.tex}
    ~~~~
    \input{contour_EAB2-2.tex}
  }
  \\[10pt]
  \input{contour_EAB2-3.tex}
  \caption{EAB$_2$: isolines $\rho_\theta(z)=1$ for two different ranges. The stability domain $D_\theta$ is
    on the left of the isoline.}
  \label{fig:EAB2-D-stab}
\end{figure}
\begin{figure}[h!]
  \centering
  \mbox{
    \input{contour_EAB3-1.tex}
    ~~~~
    \input{contour_EAB3-2.tex}
  }
  \\[10pt]
  \input{contour_EAB3-3.tex}
  \caption{Same thing as figure \ref{fig:EAB2-D-stab} for the EAB$_3$ scheme.}
  \label{fig:EAB3-D-stab}
\end{figure}
\begin{figure}[h!]
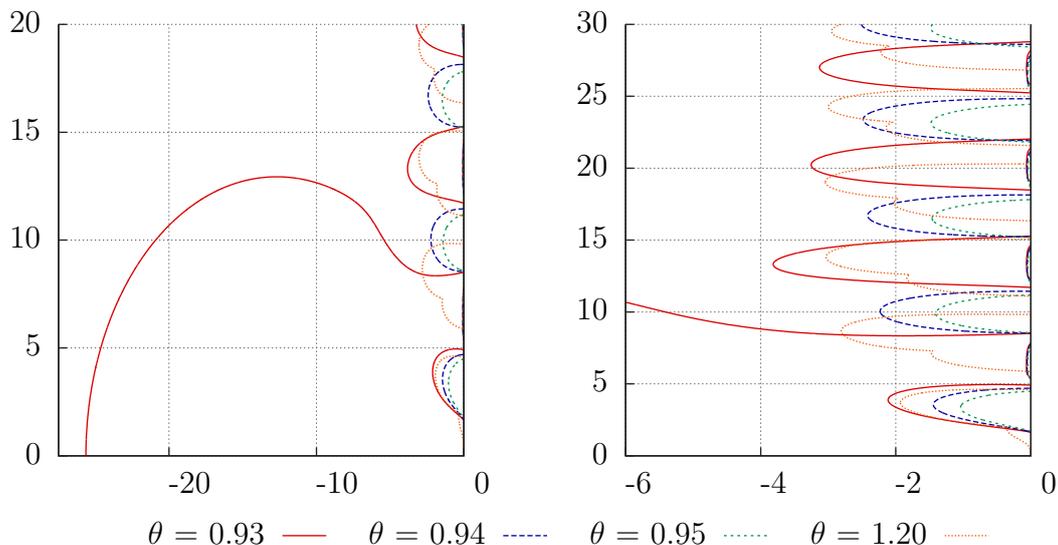

  \centering
  \mbox{
    \input{contour_EAB4-1.tex}
    ~~~~~
    \input{contour_EAB4-2.tex}
  }
  \\[10pt]
  \input{contour_EAB4-3.tex}
  \caption{Same thing as figure \ref{fig:EAB2-D-stab} for the EAB$_4$ scheme.}
  \label{fig:EAB4-D-stab}
\end{figure}
The stability domains $D_\theta$ have been plotted for various values of
$\theta$ taken from figure \ref{fig:A0-stab}. The results are depicted on the
figures \ref{fig:EAB2-D-stab} to \ref{fig:EAB4-D-stab} for $k=2$ to 4
respectively. Each figure shows the isolines $ \rho_\theta(z)=1$. The stability
domain $D_\theta$ is on the left of these curves.
\begin{itemize}
\item Figure \ref{fig:EAB2-D-stab} shows that the EAB$_2$ scheme is $A(\alpha)$
  stable when $\theta=0.75$, 0.8 and 0.9 with $\alpha\simeq 50$, 60 and 80 angle
  degrees respectively.
\item Figure \ref{fig:EAB3-D-stab} displays $A(\alpha)$ stability with
  $\alpha\simeq 60$, 70 and 60 angle degrees for $\theta=0.88$, 0.9 and 1.9
  respectively for the EAB$_3$ scheme.
\item For the EAB$_4$ scheme eventually, $A(\alpha)$ stability holds with an angle
  $\alpha$ approximately of 65, 70 and 60 degrees for $\theta=0.94$, 0.95 and
  1.2 respectively, as shown on figure \ref{fig:EAB4-D-stab}.
\end{itemize}
In all cases, when $A(\alpha)$ stability is observed, the unstable region inside
$\C^-$ is made of a discrete collection of uniformly bounded sets located along
the imaginary axes. Hence, the stability domain $D_\theta$ also contains half
planes of the form $\text{Re} (z) \le a <0$. We conjecture that, when $\theta$
is so that the EAB$_k$ scheme is $A(\alpha)$-stable, then it is also stiff stable.
\subsection{Conclusion}
For explicit linear multistep methods, $A(0)$ stability cannot occur, see
\cite[chapter V.2]{Hairer-ODE-II}. In contrast, EAB$_k$ and I-EAB$_k$ methods exhibit
much better stability properties. When $\theta$ is close enough to 1, they are
$A(\alpha)$ stable and stiff stable. Such stability properties are comparable
with those of implicit linear multistep methods. Basically speaking, these
properties will hold if the stabilization term $a(t,y)$ in \eqref{eq:F2}
approximates the Jacobian of $f(t,y)$ in \eqref{eq:F1} with an absolute discrepancy lower than 25 \%, 10 \% and 5 \% for $k=$2, 3 and 4 respectively.
\\
It is interesting to compare the results on the EAB$_2$ scheme 
with the results in \cite{perego-2009} concerning another explicit exponential scheme of order 2 named AB2*.
The AB2* scheme displays stronger absolute stability properties than the EAB$_2$ scheme.
The AB2* scheme is $A(0)-$stable for $\theta\ge 2/3$ whereas the EAB$_2$ scheme
requires $\theta\ge 0.75$.
The AB2* scheme is $A$-stable for $\theta\ge 1$ whuich property is not achieved by the EAB$_2$ scheme when $\theta\neq 1$.
Extensions of that scheme to higher orders have also been developped in \cite{RLk} that however loses the $A(0)$-stability property.
\section{Positivity properties}
Consider the scalar equation $y' = f(t,y)$,
with $f(t,y)$ chosen so that there exists a function $a(t,y)$ satisfying
\begin{equation}
  \label{eq:cond-posi-f}
  a(t,y)\le 0 \quad \text{and}\quad 
  a(t,y)(y-K_1) \le f(t,y) \le a(t,y)(y-K_2)
\end{equation}
for two constants $K_1\le K_2$.
Then the solution $y(t)$ satisfies $K_1\le y(t) \le K_2$ for $t>0$ provided that $y(0)\in [K_1, K_2]$. A proof for that property is given in \cite{perego-2009}.
It can be important for the time-stepping method to preserve that positivity property.
This is in particular the case for the application in section \ref{sec:num-res} where the gating variables $w(t)$ satisfy the property \eqref{eq:cond-posi-f} 
with $K_1=0$, $K_2=1$ and with $a(t,y)$ defining the stabilizer of the EAB$_k$ scheme.
\\
For the AB$_2^*$ scheme
introduced in \cite{perego-2009}
(that is explicit exponential and with order 2),
that property has been shown to be preserved 
for a constant linear part $a(t,y)=a$ provided that $h \le \log(3)/|a|$ and adding a condition on the initial data.
These results can be extended to the EAB$_2$ and to the  EAB$_3$ schemes.
It provides sufficient condition for the positivity.
However the technique of the proof fails to apply to the EAB$_4$ scheme.
\\
For simplicity in the sequel, we simply denote $\phi_k = \phi_k(ah)$. 
\begin{proposition}
  \label{prop:pos}
  Consider the equation $y'= ay + b(t,y)$ with $a\le 0$ and
  \begin{displaymath}
    -aK_1 \le b(t,y) \le -aK_2,
  \end{displaymath}
  so that the condition (\ref{eq:cond-posi-f})  holds for $f(t,y)=ay + b(t,y)$.
  Denoting $y_n$ the numerical solution for the EAB$_k$ scheme, we have that $y_n \in [K_1, K_2]$ for $n\ge k$ provided that the following conditions are satisfied.
  \begin{itemize}
  \item EAB$_2$ scheme: $ h \le 1/|a|$ and
    \begin{equation}
      \label{eq:cond-pos-eab2}
      C_p K_1 
      \le   \e^{ah}y_1 - h \phi_2 b_0 \le C_pK_2.
    \end{equation}
    with $C_p = e^{ah} + ah \phi_2$.
  \item EAB$_3$ scheme: $h \le \beta_3/|a|$ for a constant $\beta_3 \simeq 0.331$ and
    \begin{equation}
      \label{eq:cond-pos-eab3}
      C_p K_1 
      \le   \e^{ah}y_2 - 2h (\phi_2 + \phi_3) b_1 \le C_pK_2.
    \end{equation}
    with $C_p = e^{ah} + 2 ah \big(\phi_2+\phi_3\big)$.
  \end{itemize}
\end{proposition}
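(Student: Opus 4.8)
\emph{Overall strategy.} For each of the two schemes I would build an auxiliary sequence $(w_n)$, affine in $y_n$ and in the previously computed values of $b$, tuned so that: (i) the single interval hypothesis \eqref{eq:cond-pos-eab2} (resp. \eqref{eq:cond-pos-eab3}) is precisely the statement $w_{k-1}\in[C_pK_1,C_pK_2]$; (ii) the EAB$_k$ recursion yields a closed one-step affine recursion for $w_n$ in which every $b$-value enters with a \emph{nonnegative} coefficient; and (iii) one has $y_{n+1}=w_n+(\text{nonnegative combination of }b\text{-values})$. Since the standing hypothesis forces $-aK_1\le b(t,y)\le -aK_2$ for \emph{every} $y$, each $b$-value appearing above is automatically confined to $[-aK_1,-aK_2]$; evaluating the affine maps at the extreme values of those $b$-values then propagates the property ``$w_n\in[C_pK_1,C_pK_2]$'' by induction and, through (iii), gives $y_{n+1}\in[K_1,K_2]$. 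Only two elementary facts are needed: $\phi_j(z)>0$ for every real $z$ (from the integral representation of the $\phi_j$), and the identity $\e^{ah}-ah\phi_1=1$, which is just a rewriting of $\phi_1(z)=(\e^{z}-1)/z$.

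\emph{EAB$_2$.} Reading Table~\ref{tab:gamma-nj} with $g_n=b_n$ (constant linear part), the scheme is $y_{n+1}=\e^{ah}y_n+h(\phi_1+\phi_2)b_n-h\phi_2 b_{n-1}$. I would set $w_n:=\e^{ah}y_n-h\phi_2 b_{n-1}$, so that $y_{n+1}=w_n+h(\phi_1+\phi_2)b_n$ and, eliminating $y_{n+1}$,
\begin{displaymath}
  w_{n+1}=\e^{ah}w_n+h\phi_1 C_p\,b_n,\qquad C_p=\e^{ah}+ah\phi_2 .
\end{displaymath}
The two identities to record are $C_p-ah(\phi_1+\phi_2)=1$ (so the $y$-step sends $[C_pK_1,C_pK_2]$ into $[K_1,K_2]$) and $C_p(\e^{ah}-ah\phi_1)=C_p$ (so the $w$-step preserves $[C_pK_1,C_pK_2]$); both follow from $\e^{ah}-ah\phi_1=1$. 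The only sign requirement is $C_p\ge 0$, and since $C_p=\phi_1(ah)\,(1+ah)$ with $\phi_1>0$ this is exactly $h\le 1/|a|$. As \eqref{eq:cond-pos-eab2} is $w_1\in[C_pK_1,C_pK_2]$, an induction starting at $n=1$ gives $w_n\in[C_pK_1,C_pK_2]$ for all $n\ge 1$, hence $y_n\in[K_1,K_2]$ for $n\ge 2=k$.

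\emph{EAB$_3$.} Here the scheme reads $y_{n+1}=\e^{ah}y_n+\alpha_0 b_n+\alpha_1 b_{n-1}+\alpha_2 b_{n-2}$ with $\alpha_0=h(\phi_1+\tfrac32\phi_2+\phi_3)$, $\alpha_1=-2h(\phi_2+\phi_3)$, $\alpha_2=h(\tfrac12\phi_2+\phi_3)$, and I would take $w_n:=\e^{ah}y_n+\alpha_1 b_{n-1}=\e^{ah}y_n-2h(\phi_2+\phi_3)b_{n-1}$ --- the ``memory'' term being exactly the $b_{n-1}$-coefficient of the scheme, so that $y_{n+1}-w_n$ carries no $b_{n-1}$. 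Then $y_{n+1}=w_n+\alpha_0 b_n+\alpha_2 b_{n-2}$ and $w_{n+1}=\e^{ah}w_n+\mu b_n+\nu b_{n-2}$ with $\mu=\e^{ah}\alpha_0+\alpha_1$, $\nu=\e^{ah}\alpha_2$; one verifies $C_p-a(\alpha_0+\alpha_2)=1$ and $\mu+\nu=h\phi_1 C_p$ with $C_p=\e^{ah}+2ah(\phi_2+\phi_3)$, again via $\e^{ah}-ah\phi_1=1$. The rest of the argument is verbatim the EAB$_2$ one: \eqref{eq:cond-pos-eab3} is $w_2\in[C_pK_1,C_pK_2]$, the induction runs from $n=2$, and $y_n\in[K_1,K_2]$ for $n\ge 3=k$.

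\emph{Where the work is.} Once $(w_n)$ is identified the induction is mechanical, so the two genuine points are guessing $(w_n)$ and clearing the sign conditions. The quantities $\e^{ah}$, $\alpha_0$, $\alpha_2$, $\nu$, $h(\phi_1+\phi_2)$, $h\phi_1$ are all nonnegative whenever $ah\le 0$ because the $\phi_j$ are positive, so for EAB$_2$ nothing beyond $C_p\ge 0$ is at stake. For EAB$_3$ one needs $C_p\ge 0$ together with $\mu\ge 0$, i.e. the inequality
\begin{displaymath}
  \e^{z}\bigl(\phi_1(z)+\tfrac32\phi_2(z)+\phi_3(z)\bigr)\ \ge\ 2\bigl(\phi_2(z)+\phi_3(z)\bigr),\qquad z=ah\le 0 ,
\end{displaymath}
which I expect to hold exactly on an interval $[-\beta_3,0]$, with $\beta_3$ the root of the associated transcendental equation. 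I would then locate $\beta_3\simeq 0.331$ numerically and check that it is the binding constraint (that $C_p\ge 0$ holds on a strictly larger range of $z$), yielding the restriction $h\le\beta_3/|a|$. The main obstacle is precisely this last, purely analytic, step of settling the $\phi_j$ inequalities that pin down the admissible step sizes; and it is presumably also the point where the analogous bookkeeping breaks at order $4$, explaining why the construction stops at EAB$_3$.
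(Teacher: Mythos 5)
Your proof is correct and follows essentially the same route as the paper's: your auxiliary sequence $w_n=\e^{ah}y_n-h\phi_2 b_{n-1}$ (resp.\ $w_n=\e^{ah}y_n-2h(\phi_2+\phi_3)b_{n-1}$) is exactly the paper's $s_n$, and the one-step recursion $w_{n+1}=\e^{ah}w_n+h\phi_1 C_p\,b_n$ together with the sign condition $\phi_1\e^{ah}+\phi_2(\tfrac32\e^{ah}-2)+\phi_3(\e^{ah}-2)\ge 0$ defining $\beta_3$ coincide with the paper's. The only differences are cosmetic --- the paper first proves a positivity lemma for $b\ge 0$ and then shifts via $u_n=y_n-K_1$ and $v_n=K_2-y_n$, whereas you carry the two-sided bounds directly --- and your factorization $C_p=\phi_1(ah)(1+ah)$ makes the EAB$_2$ restriction $h\le 1/|a|$ more transparent than the paper's unexplained ``simple function analysis''.
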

The proof of proposition \ref{prop:pos} will be based on the following result.
\begin{lemma}
  \label{lem:pos}
  Consider the equation $y'=ay + b(t,y)$ with $a\le 0$ 
  as in proposition \ref{prop:pos} and moreover assume that $b(t,y)\ge 0$.
  Its numerical solution with the EAB$_k$ scheme is denoted $y_n$.
  \\
  For the EAB$_2$ scheme,  $y_n \ge 0$  for $n\ge 2$ if:
  \begin{equation}
    \label{eq:pos-cond-EAB2}
    h \le 1/|a| \quad \text{and} \quad 
    0 \le \e^{ah}y_1 - h \phi_2 b_0,
  \end{equation}
  For the EAB$_3$ sceme: $y_n\ge 0$ for $n\ge 3$ if
  \begin{equation}
    \label{eq:pos-cond-EAB3}
    h \le \beta_3/|a| \quad \text{and} \quad 
    0 \le \e^{ah}y_2 - 2 h (\phi_2+\phi_3) b_1,
  \end{equation}
  for a constant $\beta_3 \simeq 0.331$.
\end{lemma}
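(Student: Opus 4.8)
The plan is to rewrite each scheme as a genuine linear multistep recursion with constant (exponential) coefficients, to observe that exactly one lagged term carries a negative weight, and to absorb that term into an auxiliary sequence whose nonnegativity then propagates by a one‑step induction. Throughout I use that, for every real $z$, $\phi_j(z)=\frac1{(j-1)!}\int_0^1 \e^{(1-\sigma)z}\sigma^{j-1}\d{\sigma}>0$ (which follows at once from the recursion \eqref{phi_k}); since $a\le 0$, this gives $\phi_1,\phi_2,\phi_3>0$ and $\e^{ah}>0$, where I abbreviate $\phi_j=\phi_j(ah)$ as in the statement. In particular $\phi_1+\phi_2$, $\alpha:=\phi_1+\tfrac32\phi_2+\phi_3$ and $\gamma:=\tfrac12\phi_2+\phi_3$ are all positive, and the only negative coefficient that will appear is $\beta:=-2(\phi_2+\phi_3)$.

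For EAB$_2$, inserting $g=b$ and Table~\ref{tab:gamma-nj} into \eqref{EAB-2} gives $y_{n+1}=\e^{ah}y_n+h(\phi_1+\phi_2)b_n-h\phi_2 b_{n-1}$ for $n\ge1$. I set $w_n:=\e^{ah}y_n-h\phi_2 b_{n-1}$, so that $y_{n+1}=w_n+h(\phi_1+\phi_2)b_n$ and it suffices to show $w_n\ge0$ for $n\ge1$. The base case $w_1\ge0$ is the hypothesis in \eqref{eq:pos-cond-EAB2}. Substituting $y_n=w_{n-1}+h(\phi_1+\phi_2)b_{n-1}$ gives
\begin{displaymath}
  w_n=\e^{ah}w_{n-1}+h\big(\e^{ah}(\phi_1+\phi_2)-\phi_2\big)b_{n-1},\qquad n\ge 2 .
\end{displaymath}
The algebraic heart of this case is the identity $\e^{z}\big(\phi_1(z)+\phi_2(z)\big)-\phi_2(z)=(1+z)\,\phi_1(z)^2$, obtained from $z\phi_1(z)=\e^z-1$ and $z^2\phi_2(z)=\e^z-1-z$ by clearing denominators and recognising $\e^{2z}-2\e^z+1$. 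Hence the bracket equals $(1+ah)\phi_1^2\ge0$ precisely when $h\le1/|a|$, the induction closes, and $y_n\ge0$ for $n\ge2$.

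For EAB$_3$ the argument has the same shape. Table~\ref{tab:gamma-nj} and \eqref{EAB-2} give, for $n\ge2$,
\begin{displaymath}
  y_{n+1}=\e^{ah}y_n+h\big(\alpha\, b_n+\beta\, b_{n-1}+\gamma\, b_{n-2}\big),
\end{displaymath}
in which only $\beta$ is negative. Setting $w_n:=\e^{ah}y_n+h\beta b_{n-1}$, one has $y_{n+1}=w_n+h\alpha b_n+h\gamma b_{n-2}$, so it is enough to prove $w_n\ge0$ for $n\ge2$; the base case $w_2\ge0$ is exactly \eqref{eq:pos-cond-EAB3}. Substituting $y_n=w_{n-1}+h\alpha b_{n-1}+h\gamma b_{n-3}$ yields
\begin{displaymath}
  w_n=\e^{ah}w_{n-1}+h\big(\e^{ah}\alpha+\beta\big)b_{n-1}+h\,\e^{ah}\gamma\, b_{n-3},\qquad n\ge 3,
\end{displaymath}
so the induction closes as soon as $\e^{ah}\alpha+\beta\ge0$.

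Everything therefore reduces to the scalar inequality $\psi(z):=\e^{z}\big(\phi_1(z)+\tfrac32\phi_2(z)+\phi_3(z)\big)-2\big(\phi_2(z)+\phi_3(z)\big)\ge0$ for $z\in[-\beta_3,0]$, and this is the step I expect to be the real obstacle. There is no factorization as clean as in the EAB$_2$ case: multiplying through by $z^3$ (using $z^j\phi_j(z)=\e^z-\sum_{i=0}^{j-1}z^i/i!$) converts it into an elementary but not transparent exponential--polynomial inequality. One checks $\psi(0)=7/12>0$, and $\beta_3\simeq0.331$ is then defined as minus the zero of $\psi$ closest to the origin; the work is to verify that $\psi$ stays nonnegative on the whole segment $[-\beta_3,0]$ — e.g.\ by showing $\psi$ is monotone there, or that the associated polynomial--exponential function does not change sign on that segment — rather than merely at the endpoints. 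Granting this estimate, the inductions above give $y_n\ge0$ for $n\ge2$ (EAB$_2$) and $n\ge3$ (EAB$_3$), the remaining index bookkeeping is routine, and the lemma follows.
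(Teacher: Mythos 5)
Your proof is correct and follows essentially the same route as the paper: the same auxiliary sequence ($w_n$ is exactly the paper's $s_n$), the same one-step induction, and the same reduction to a scalar inequality in $z=ah$, which both you and the paper ultimately settle by a function study/numerical evaluation of $\beta_3$. Your explicit factorization $\e^{z}(\phi_1+\phi_2)-\phi_2=(1+z)\phi_1(z)^2$ is a slightly cleaner justification of the $h\le 1/|a|$ threshold than the paper's $\phi_1(\e^{ah}+ah\phi_2)$ form, but the arguments are equivalent.
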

\begin{proof}[Proof of lemma \ref{lem:pos}]
  In the present case we have $g_n(t,y)= b(t,y)$.
  For the EAB$_2$ scheme we have
  \begin{displaymath}
    y_{n+1} = \underbrace{\big(\e^{ah}y_n - h\phi_2 b_{n-1}\big)}_{:=s_n} ~+~
     \underbrace{h\big( \phi_1 b_n + \phi_2 b_n\big)}_{\ge 0}.
  \end{displaymath}
It suffices to prove that $s_n\ge 0$, which is done by induction.
Assume that $s_n\ge 0$. We have $s_{n+1} = \e^{ah}y_{n+1} -  h\phi_2 b_{n}$ 
and we get
\begin{align*}
  s_{n+1} = \e^{ah} s_n + h b_n
  \big(
  \e^{ah} \phi_1 + \e^{ah} \phi_2 - \phi_2
  \big)
  = \e^{ah} s_n + h b_n \phi_1 (\e^{ah} + ah \phi_2).
\end{align*}
By assumption $\e^{ah} s_n \ge 0$, $b_n\ge 0$ and so $ h b_n \phi_1\ge 0$.
We then must find the condition so that $\e^{ah} + ah \phi_2\ge 0$.
A simple function analysis gives $ ah \in [-1,0] $ 
which is the stability condition in lemma \ref{lem:pos}.
The condition (\ref{eq:pos-cond-EAB2}) states that $s_1\ge 0$.
\\ \\
For the EAB$_3$ scheme we define
\begin{displaymath}
  s_n = \e^{ah}y_n - 2 h b_{n-1} (\phi_2 + \phi_3)
\end{displaymath}
so that we have
\begin{displaymath}
   y_{n+1} = s_n
   ~+~
     h b_n \big( 
     \phi_1 + 3/2 \phi_2  + \phi_3
     \big)
     ~+~
     h b_{n-2} \big( 
      \phi_2  + \phi_3
     \big)
\end{displaymath}
The two terms on the right are non-negative. We will prove by induction that $s_n\ge 0$.
Assume that $s_n\ge 0$, then:
\begin{displaymath}
  s_{n+1} = \e^{ah} s_n 
  + h b_n
  \big(
  \phi_1 \e^{ah}
  +
  \phi_2
  (3/2 \e^{ah} - 2)
  +
  \phi_3
  (\e^{ah} - 2)
  \big)
  + h b_{n-1}
  \big(
  \phi_2/2 + \phi_3
  \big).
\end{displaymath}
In the right hand side, the first and last terms are non-negative.
Thus the condition for $s_{n+1}$ to be non-negative is that
$$\phi_1 \e^{ah}
  +
  \phi_2
  (3/2 \e^{ah} - 2)
  +
  \phi_3
  (\e^{ah} - 2)\ge 0.$$
The study of that inequality gives $h \le \beta_3 / |a|$ and a numerical evaluation of $\beta_3$ is $\beta_3 \simeq 0.331$. With the initial condition $s_2\ge 0$ we recorev (\ref{eq:pos-cond-EAB3}).
\end{proof}
\begin{proof}[Proof of proposition \ref{prop:pos}]
Consider the EAB$_2$ scheme.
We define $u_n = y_n - K_1$.
We have 
\begin{align*}
  u_{n+1} =& (y_n - K_1) + h\Big( 
  \phi_1 \big( a(y_n-K_1) + (b_n + aK_1)\big)  \\ &
  ~~~~~~~~~~~~~~~~~~~~~+~\phi_2\big((b_n + aK_1) - (b_{n-1} + aK_1)\big)
  \Big)\\ =&
  u_n + h\Big( \phi_1 \big(a u_n + \bar{b}_n\big)~+~ 
  \phi_2\big( \bar{b}_n - \bar{b}_{n-1}\big)\Big),
\end{align*}
with $\bar{b}_j = b_j + aK_1 $.
Thus $u_n$ is the numerical solution with the EAB$_2$ scheme of the equation 
$u' = au + \bar b(t,u)$ with $\bar b(t,u)=b(t,u) + aK_1$.
 By assumption $\bar b(t,u)\ge 0$ and the lemma \ref{lem:pos} applies to $u_n$.
It gives the lower bound in condition \eqref{eq:cond-pos-eab2}.
The upper bound is similarly proven by considering $v_n = K_2 - y_n$.
The proof is  the same for the EAB$_3$ scheme.
\end{proof}
\section{Numerical results}
\label{sec:num-res}
We present in this section numerical experiments that investigate the
convergence, accuracy and stability properties of the I-EAB$_k$ and EAB$_k$
schemes. 
The membrane equation in cardiac electrophysiology is considered for two ionic models, the Beeler-Reuter (BR) and to the ten Tusscher
{\it et al.} (TNNP) models. We refer to
\cite{beeler-reuter} and \cite{tnnp} for the definition of the models. The
stiffness of these two models is due to the presence of different time scales
ranging from 1 ms to 1 s, as depicted on figure \ref{fig:TNNP}. The stabilizer
$a_n$ always is a diagonal matrix in this section.
\begin{figure}[h!]
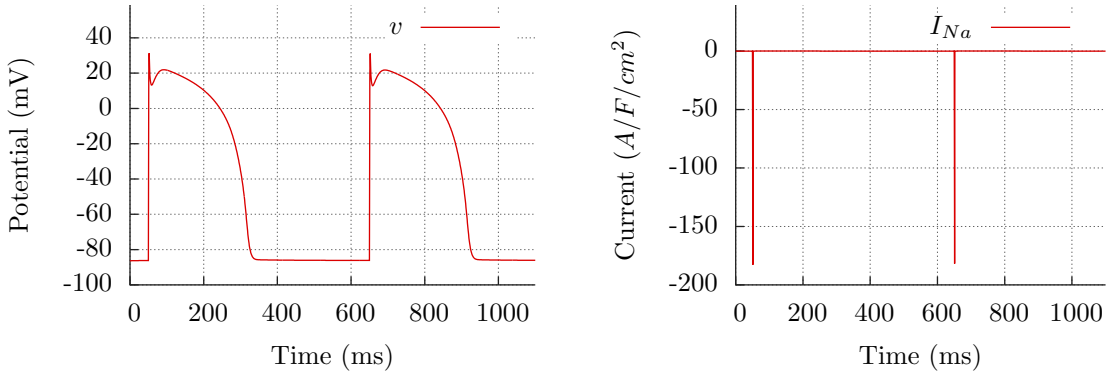

  \centering
  \mbox{
    \input{V_tnnp.tex}
    ~~~~~~
    \input{INa_tnnp.tex}
  }
  \caption{Two consecutive action potentials for the TNNP model: transmembrane
    potential $v$ (left) and the fast sodium current $I_\text{Na}$ (right), that
    is the main component of $\Iion$ during the fast upstroke of the action
    potential.}
  \label{fig:TNNP}
\end{figure}
\\
The membrane equation has the general form, see \cite{Hodgkin52,beeler-reuter,LR2a,tnnp}:
\begin{equation}
  \label{eq:edo-ionic-w}
  \!\!\!\!\!\!
  \diff{w_i}{t}
  = \frac{w_{\infty,i}(v) - w_i}{\tau_i(v)} 
  ,\qquad 
  \diff{c}{t} = q(w,c,v), 
  ,\qquad 
  \diff{v}{t} = - \Iion(w,c,v) + \Ist(t),
\end{equation}
where $w=(w_1,\dots,w_p)\in\R^p$ is the vector of the gating variables,
$c\in \R^q$ is a vector of ionic concentrations or other state variables, and
$v\in\R$ is the cell membrane potential. These equations model the evolution of
the transmembrane potential of a single cardiac cell. The four functions
$w_{\infty,i}(v)$, $\tau_i(v)$, $q(w,c,v)$ and $\Iion(w,c,v)$ are given reaction
terms. They characterize the cell model. The function $\Ist(t)$ is a source
term. It represents a stimulation current.
\\
The formulation \eqref{eq:F2} is recovered with,
\begin{displaymath}
  % \label{eq:vec-Y}
  a(t,y) = 
  \begin{pmatrix}
    -1/\tau(v) & 0& 0 \\
    0 & 0& 0 \\
    0 & 0& 0
  \end{pmatrix}, \quad b(t,y) =
  \begin{pmatrix}
    w_\infty(v)/\tau(v) \\
    q(y) \\
    -\Iion(y) + \Ist(t)
  \end{pmatrix},
\end{displaymath}
for $y = (w,c,v) \in \R^N$ ($N=p+q+1$) and where
$-1/\tau(v) = \diag\left (-1/\tau_i(v)\right)_{i=1\ldots p}$
and
$w_\infty(v)/\tau(v) = \big(
w_{\infty,i}(v)
/\tau_i(v)
\big)_{i=1\ldots p}
$
.
\subsection{Implementation and computational cost}
\label{sec:implementation}
The computation of $y_{n+1}$ with the I-EAB$_k$ and EAB$_k$ schemes requires the data
$y_{n-i}$, $a_{n-i}:=a(t_{n-i}, y_{n-i})$ and $b_{n-i}:=b(t_{n-i}, y_{n-i})$ for
$i=0\ldots k$.
\subsubsection*{EAB$_k$ practical implementation}
Firstly, the $g_{ni} = b_{n-i} + (a_{n-i} -a_n) y_{n-i}$ are updated at each
time step. Then the coefficients $\gamma_{nj}$ in table \ref{tab:gamma-nj} are
computed.
Secondly, the computation of $y_{n+1}$ by formula \eqref{EAB-2} also requires
the computation of the $\phi_j(a_n h)\gamma_{nj}$. This is a matrix-vector
product in general.
\\
In the present case of a diagonal stabilizer, it becomes a scalar-scalar product
per row. The $\phi_j(a_n h)$ are computed on all diagonal entries of $a_n
h$. This computation simply necessitates to compute $\phi_0(a_n h)= \e^{a_n h}$
(one exponential per non zero diagonal entry) thanks to the recursion rule
\eqref{phi_k}.
\\
In general, the relation \eqref{phi_k} can be used to replace the computation of the
$\phi_j(a_n h)\gamma_{nj}$ for $j=0\ldots k$ by the computation of a single
product $\phi_k(a_n h)w_k $. Denoting by $w_1 = a_ny_n + b_n$ and
$w_{j} = \gamma_{nj} + a_n h w_{j-1} $:
\begin{align*}
  \text{EAB}_2~:\quad y_{n+1} &= y_n + h \left[ 
    w_1  
    + \phi_2(a_n h) w_2
  \right],
  \\
  \text{EAB}_3~:\quad y_{n+1} &= y_n + h \left[ 
    w_1 + w_2 /2
    + \phi_3(a_n h)w_3
  \right],   
  \\
  \text{EAB}_4~:\quad y_{n+1} &= y_n + h \left[ 
    w_1 + w_2 / 2 + w_3 / 6
    + \phi_4(a_n h) w_4
  \right]. 
\end{align*}
\subsubsection*{I-EAB$_k$ practical implementation}
In addition, the I-EAB$_k$ method \eqref{sch_IEAB} requires a quadrature rule of
sufficient order to preserve the scheme accuracy and convergence order. We used
the Simpson quadrature rule for the cases $k=2$, 3 and the three point Gaussian
quadrature rule for $k$=4. We point out that $a_n$ is assumed diagonal here so
that the matrix exponentials below actually are scalar exponential.
\\
The I-EAB$_k$ method with Simpson quadrature rule reads,
\begin{displaymath}
  y_{n+1} = \e^{\tilde{g}_1} \left( y_n +  b_n h/6  \right)
  + \left( \tilde{b}_{1} + 4\e^{\delta}\tilde{b}_{1/2}
  \right)h/6,
\end{displaymath}
where (with the notations of section \ref{sec:IEABk}
$\tilde{g}_1 = \tilde{g}_n(t_{n+1})$,
$\delta = \tilde{g}1 - \tilde{g}_n(t_{n}+h/2)$,
$\tilde{b}_{1}= \tilde{b}_n(t_{n+1})$ and
$\tilde{b}_{1/2}= \tilde{b}_n(t_{n}+h/2)$. These coefficients are given for
$k=2$ by
\begin{flalign*}
  \tilde{g}_1 = \left( 3 a_n - a_{n-1} \right) h/2,\qquad &
  \delta = \left( 7 a_n - 3 a_{n-1} \right) h/8,\\
  \tilde{b}_{1} =2b_n-b_{n-1},\qquad & \tilde{b}_{1/2} =(3 b_n - b_{n-1})/2 ,
\end{flalign*}
and for $k=3$ by
\begin{align*}
  \tilde{g}_1  = \left( 23 a_n - 16 a_{n-1} + 5 a_{n-2} \right)h /12,\qquad &
  \delta = \left(29 a_n - 25 a_{n-1}+ 8 a_{n-2} \right)h/24,\\
  \tilde{b}_{1} =3b_n - 3b_{n-1} + b_{n-2},\qquad &
  %% ICI: CHARLES: MODIF
  %% \tilde{b}_{1/2} = \left( 15 b_n - 10 b_{n-1} + 3 b_{n-2} \right)/12,
  \tilde{b}_{1/2} = \left( 15 b_n - 10 b_{n-1} + 3 b_{n-2} \right)/8,
\end{align*}
\\
The I-EAB$_k$ method with the three point Gaussian quadrature rule reads,
\begin{displaymath}
  y_{n+1}  = \e^{\tilde{g}_1} \left( y_n + \frac{h}{18} \left(     
      5 \tilde{b}_l \e^{-\tilde{g}_l} 
      + 8 \tilde{b}_0 \e^{-\tilde{g}_0} 
      + 5 \tilde{b}_r \e^{-\tilde{g}_r} 
    \right)\right), 
\end{displaymath}
with $\tilde{b}_s = \tilde{b}_n(t_s)$, $\tilde{g}_s = \tilde{g}_n(t_s)$ for
$s\in\{l, 0, r\}$ where $t_l = t_n + (1 - \sqrt{3/5}))h/2$, $t_0 = t_n + h/2$,
$t_r = t_n + (1 + \sqrt{3/5}))h/2$ and with
$\tilde{g}_1 = \tilde{g}_n(t_{n+1})$.
These parameters are linear combination of the data $a_{n-i}$, $b_{n-i}$ for
$i=0\ldots k-1$ with fixed coefficients. Formula for $k=4$ follow. The
parameters $\tilde{b}_s$ are given by:
\begin{align*}
  16\,\tilde{b}_0 =& 
  35    \,b_n
  - 35  \,b_{n-1}
  + 21  \,b_{n-2}
  - 5   \,b_{n-3}
  \\
  40\,\tilde{b}_r = &
  ( 95  +179  \sqrt{15} / 15 )  b_n
  - ( 107 +119  \sqrt{15} / 5  )  b_{n-1} \\&\quad 
  + ( 69  +79   \sqrt{15} / 5  )  b_{n-2}
  - ( 17  +59   \sqrt{15} / 15 )  b_{n-3}
\end{align*}
and $\tilde{b}_l$ is the radical conjugate of $\tilde{b}_r$ (the radical
conjugate of $x+\sqrt{y}$ is $x-\sqrt{y}$).
Finally, the parameters $\tilde{g}_s$ definition is:
\begin{align*}
  24/h \;\tilde{g}_1  =& 
  55    \,a_n
  - 59  \,a_{n-1}
  + 37  \,a_{n-2}
  - 9   \,a_{n-3}
  ,
  \\
  384/h \;\tilde{g}_0 = &
  297    \,a_n
  - 187  \,a_{n-1}
  + 107  \,a_{n-2}
  - 25   \,a_{n-3}
  ,
  \\
  200/h \;\tilde{g}_r = & 
  (  797/4  + 45  \sqrt{15}    )  a_n
  - (  2233/12 + 47  \sqrt{15}    )  a_{n-1} 
  \\ & \quad
  + (  1373/12 + 29  \sqrt{15}    )  a_{n-2}
  - (  331/12  + 7   \sqrt{15}    )  a_{n-3} 
  ,
\end{align*}
and $\tilde{g}_l$ is the radical conjugate of $\tilde{g}_r$.
\subsubsection*{Computational cost}
Consider an ODE system \eqref{eq:F1} whose numerical resolution cost is
dominated by the computation of $(t,y)\mapsto f(t,y)$. This might be the case in
general for \textit{``large and complex models''}. For such problems explicit
multistep methods are relevant since they will require one such operation per
time step. In contrast, 
implicit methods, associated to a non linear solver, may necessitate
a lot of these operations, especially for large time steps when convergence is
harder to reach.

In addition, the I-EAB$_k$ and EAB$_k$ schemes need several specific operations.  In
the case of a diagonal function $a(t,y)$ they have been previously described:
the EAB$_k$ require one scalar exponential computation per non zero row of
$a(t,y)$, the I-EAB$_k$ with Simpson rule needs twice more and the I-EAB$_3$ with 3
point Gaussian quadrature rule four times more.
Such a cost is not negligible, but is at worst of same order than computing
$(t,y)\mapsto f(t,y)$ for complex models. For the TNNP model considered here,
computing $(t,y)\mapsto f(t,y)$ costs 50 scalar exponentials whereas the EAB$_k$
implementation adds 7 supplementary scalar exponentials per time step.
In terms of cost per time step, the EAB$_k$ method is rather optimal. The
relationship between accuracy and cost of the EAB$_k$ method has been investigated
in \cite{Cari_2016}: more details are available in section \ref{sec:acc}.
\subsection{Convergence}
\begin{figure}[h!]
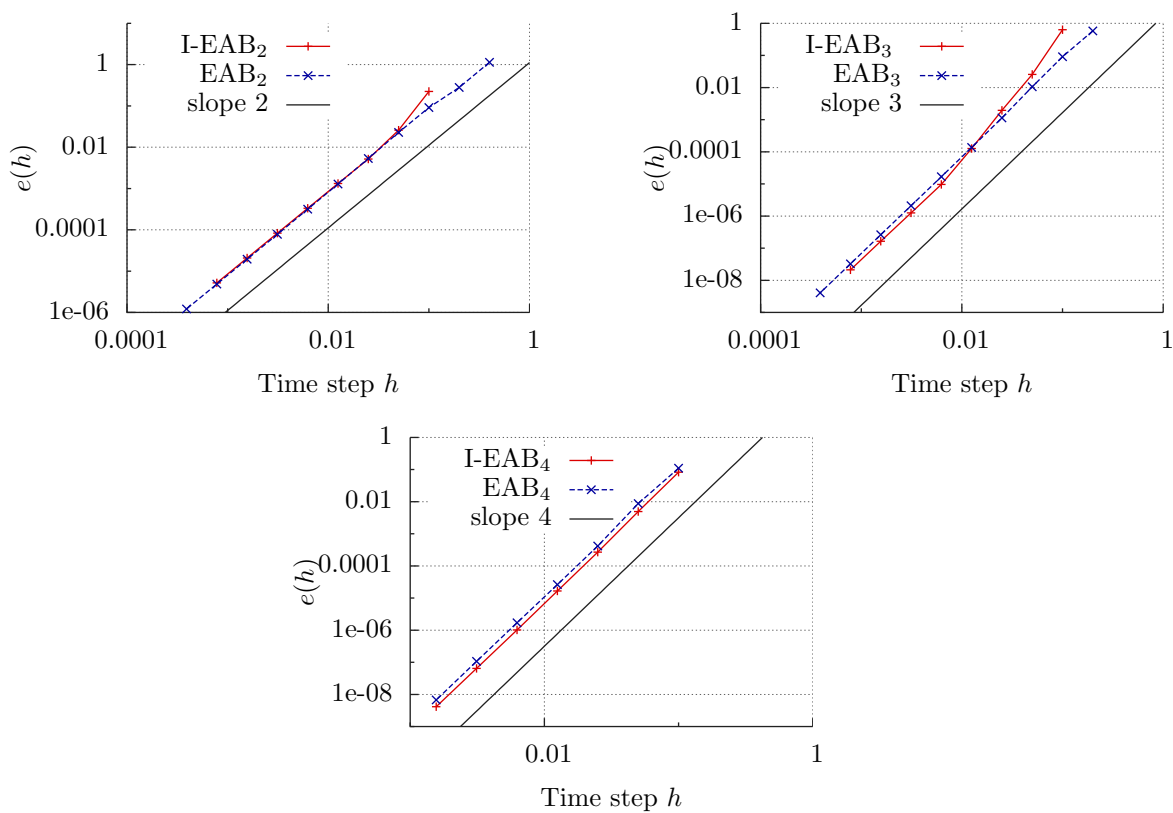

  \centering
  \mbox{
    \input{conv2.tex}
    ~~~~~~~~~~~~~~
    \input{conv3.tex}
  }
  \\[13pt]
  \input{conv4.tex}
  \caption{Relative $L^\infty$ error $e(h)$ 
    for the I-EAB$_k$ and EAB$_k$ schemes, $k=2$, 3 and 4, 
    and for the BR model.}
  \label{fig:The numerical conve}
\end{figure}
For the chosen application, no theoretical solution is available. Convergence
properties are studied by computing a reference solution $y_{ref}$ for a
reference time step $h_{ref}$ with the Runge Kutta 4 scheme. Numerical
solutions $y$ are computed to $y_{ref}$ for coarsest time steps
$h = 2^p h_{ref}$ for increasing $p$.
Any numerical solution $y$ consists in successive values $y_n$ at the time
instants $t_n = n h$. On every interval $(t_{3n}, t_{3n+3})$ the polynomial
$\overline{y}$ of degree at most 3 so that
$\overline{y}(t_{3n+i}) = y_{3n+i}$, $i=0\ldots 4$ is constructed. On $(0,T)$,
$\overline{y}$ is a piecewise continuous polynomial of degree 3. Its values at
the reference time instants $n h_{ref}$ are computed. This provides a
projection $P(y)$ of the numerical solution $y$ on the reference grid. Then
$P(y)$ can be compared with the reference solution $y_{ref}$. The numerical
error is defined by
\begin{equation}
  \label{eq:def-erreur}
  e(h)= \frac{ \max \left|\vref - P(v)\right|}{ \max \left|\vref\right|},
\end{equation}
where the potential $v$ is the last and stiffest component of $y$ in equation
\ref{eq:edo-ionic-w}.
\\
The convergence graphs for the BR model are plotted on figure
\ref{fig:The numerical conve}. All the schemes display the expected asymptotic
behavior $e(h)=O(h^k)$ as $h\to 0$, as proved in theorem \ref{thm:conv}.
\subsection{Stability}
The stiffness of the BR and TNNP models along one cellular electrical cycle
(as depicted on figure \ref{fig:TNNP} has been evaluated in \cite{spiteri}.
The largest negative real part of the eigenvalues of the Jacobian matrix
during this cycle is of $-1170$ and $-82$ for the TNNP and BR models
respectively. This means that the TNNP model is 15 times stiffer than the BR
model ( $ 15 \simeq 1170/82 $).
\\
We want to evaluate the impact of this increase of stiffness in terms of
stability for the EAB$_k$ and I-EAB$_k$ schemes and to provide a comparison with
some other classical time stepping methods. To this aim we consider the
\textit{critical time step} $\Delta t_0$. It is defined as the largest time
step such that the numerical simulation runs without overflow nor non linear
solver failure for $h< \Delta t_0$. The numerical evaluation of $\Delta t_0$
is  easy for explicit methods. For implicit methods, the choice of the
non linear solver certainly impacts $\Delta t_0$. Without considering more
deeply this problem, we just carefully set up the non linear solver, so as to
provide the largest $\Delta t_0$. In practice, we have been using a Jacobian
free Krylov Newton method.
% The critical time steps for implicit methods presented here rather are
% representative than absolute values.
\begin{table}[h!]
  \centering
  \caption{Critical time step $\Delta t_0$}
  \vspace{5pt}
  \mbox{\small{
  \subtable[Classical methods]{%
    \begin{tabular}{lll} \toprule
      & BR & TNNP \\ \midrule
      AB$_2$ & \num{0.124e-1} & \num{0.850e-3} \\
      BDF$_2$ & \num{0.306} & \num{0.158} \\ \addlinespace
      AB$_3$ & \num{0.679e-2} & \num{0.464e-3} \\
      BDF$_3$ & \num{0.362} & \num{0.181} \\ \addlinespace
      AB$_4$ & \num{0.372e-2} & \num{0.255e-3} \\
      RK$_4$ & \num{0.338e-1} & \num{0.255e-2} \\
      BDF$_4$ & \num{0.423} & \num{0.201} \\ \bottomrule
    \end{tabular}%
  }
  \hskip 4em
  \subtable[\mbox{I-EAB$_k${} and  EAB$_k${} exponential methods}]{%
    \begin{tabular}{lll} \toprule
      & BR & TNNP  \\ \midrule
      I-EAB$_2$ &  \num{0.121}& \num{0.103} \\
      EAB$_2$  &  \num{0.424}& \num{0.233} \\ \addlinespace
      I-EAB$_3$ &  \num{0.103}& \num{0.123} \\
      EAB$_3$  &  \num{0.203}& \num{0.108} \\ \addlinespace
      I-EAB$_4$ &  \num{0.133}&\num{0.106}  \\
      EAB$_4$  &  \num{0.122}&\num{0.756e-1} \\ \\ \bottomrule
    \end{tabular}%
  }
}}
  \label{tab_crit_tim2}
\end{table}

Results are on table \ref{tab_crit_tim2}.
The Adams-Bashforth (AB$_k$) and the backward differentiation (BDF$_k$)
methods of order $k$ have been considered, together with the RK$_4$ scheme.
\\
The AB$_k$ and the RK$_4$ schemes have bounded stability domain (see
\cite[p. 243]{Hairer-ODE-II}). Then it is expected for the critical time step
to be divided by a factor close to 15 between the BR and TNNP models. Results
presented in table \ref{tab_crit_tim2} show this behavior.
\\
The BDF$_2$ scheme is $A$-stable whereas the BDF$_3$ and BDF$_4$ are
$A(\alpha)$-stable with large angle $\alpha$ (see
\cite[p. 246]{Hairer-ODE-II}). Hence the critical time step is expected to
remain unchanged between the two models. Table \ref{tab_crit_tim2} shows that
the $\Delta t_0$ actually are divided by approximately 2.
% \begin{table}[htbp!]
%   \caption{Critical time step $\Delta t_0$ for the I-EAB$_k$ and
%   EAB$_k$ schemes.}
%   \label{tab_crit_tim1}
%   \centering
%   \begin{tabular}{lll} \hline
%     & BR & TNNP  \\ \hline
%     I-EAB$_2$ &  \num{0.121}& \num{0.103} \\
%     EAB$_2$  &  \num{0.424}& \num{0.233} \\ \hline
%     I-EAB$_3$ &  \num{0.103}& \num{0.123}  \\
%     EAB$_3$  &  \num{0.203}& \num{0.108} \\ \hline
%     I-EAB$_4$ &  \num{0.133}&\num{0.106}  \\
%     EAB$_4$  &  \num{0.122}&\num{0.756e-1}  \\ \hline
%   \end{tabular}
% \end{table}
\\
The critical time steps for the I-EAB$_k$ and EAB$_k$ models are presented in table
\ref{tab_crit_tim2}. The critical time steps for the I-EAB$_k$ schemes remain
almost unchanged from the BR to the TNNP model. For the EAB$_k$, they are
divided by approximately 2, which behavior is similar as for the BDF$_k$
method.
\\
As a conclusion, for the present application, the EAB$_k$ and I-EAB$_k$ methods are
as robust to stiffness than the implicit BDF$_k$ schemes, though being
explicit. As a matter of fact, section \ref{sec:stab-domain} shows that the
stability domains for the I-EAB$_k$ and EAB$_k$ schemes depend on the discrepancy
between the complete Jacobian matrix and $a(t,y)$. In the present case,
$a(t,y)$ only contains a part of the Jacobian matrix diagonal. It is very
interesting to notice that robustness to stiffness is actually achieved with
this choice. It is finally also interesting to see that the critical time
steps of implicit and exponential methods are of the same order.
\subsection{Accuracy}
\label{sec:acc}
In terms of accuracy, the schemes can be compared using the relative error
$e(h)$ in equation \ref{eq:def-erreur}.
The EAB$_k$ and I-EAB$_k$ schemes can be compared with the AB$_k$ methods only at
very small time steps, because of the lack of stability of AB$_k$ schemes (see
table \ref{tab_crit_tim2}). In table \ref{tab:acc-1} are given the accuracies
of these methods for a given time step $h=10^{-3}$ and for the BR model. Ii is
observed that the same level of accuracy is obtained with AB$_k$ and EAB$_k$ at
fixed $k$. These figures illustrate that inside the asymptotic convergence
region, the EAB$_k$, I-EAB$_k$ and AB$_k$ schemes are equivalent in terms of accuracy.
\begin{table}[h!]
  \caption{Accuracy $e(h)$ for the AB$_k$, I-EAB$_k$ and EAB$_k$ schemes: using the
    BR model and fixed time step $h=10^{-3}$}
  \vspace{5pt}
  \label{tab:acc-1}
  \centering
  \mbox{\small{
  \vspace{5pt}
  \begin{tabular}{cccc} \toprule
    & $k=2$ & $k=3$ & $k=4$ \\ \midrule
    AB$_k$   & \num{5.32e-6} & \num{4.33e-8} & \num{8.69e-10} \\
    I-EAB$_k$ & \num{8.55e-6} & \num{4.44e-8} & \num{7.30e-10} \\
    EAB$_k$  & \num{7.90e-6} & \num{7.00e-8} & \num{1.16e-9}  \\ \bottomrule
  \end{tabular}
}}
\end{table}
% 
% 
% 
% However, such a small time step has been regarded here (for stability
% reasons) that all these numerical solution correspond to high computational
% cost.

\begin{table}[h!]
  \caption{Accuracy for the TNNP model}
  \vspace{5pt}
  \label{acc_tnnp}
  \centering
  \mbox{\small{ \!\!\!\!\!\!\!\!\!\!\!\!\!\!\!\!\!\!\!\!\!\!\!\!\!\!\!\!
  \subtable[EAB$_k$]{%
    \begin{tabular}{cccc}\toprule
      $h$ & $k=2$ & $k=3$ & $k=4$ \\ \midrule
      \num{0.1}   & \num{0.351}   & \num{0.530}   &  \\
      \num{0.05}  & \num{9.01e-2} & \num{5.59e-2} & \num{8.93e-2} \\
      \num{0.025} & \num{2.14e-2} & \num{7.34e-3} & \num{8.34e-3} \\ \bottomrule
    \end{tabular}
  }~~~~~~~~
  %\vskip 1cm
  \subtable[BDF$_k$]{%
    \begin{tabular}{cccc}\toprule
      % $k=2$ & $k=3$ & $k=4$ \\ \midrule
      % &  & \num{0.129} \\ 
      % \num{3.57e-2} & \num{1.15e-2} & \num{1.44e-2} \\ 
      % \num{1.10e-2} & \num{2.58e-3} & \num{2.38e-3} \\ \bottomrule
      $h$ & $k=2$ & $k=3$ & $k=4$ \\ \midrule
      \num{0.1}   &  &  & \num{0.129} \\ 
      \num{0.05}  & \num{3.57e-2} & \num{1.15e-2} & \num{1.44e-2} \\ 
      \num{0.025} & \num{1.10e-2} & \num{2.58e-3} & \num{2.38e-3} \\ \bottomrule
    \end{tabular}
  }}
  }
\end{table}
Comparison at large time steps between the EAB$_k$ and BDF$_k$ for the TNNP
model is shown in table \ref{acc_tnnp}. These figures show that for large time
steps BDF$_k$ is more accurate than EAB$_k$. A gain in accuracy of factor 2.5, 5
and 6 is observed for $h=0.05$ and for $k$=2, 3 and 4 respectively.  However,
compare row 3 for EAB$_k$ ($h=0.025$) with row 2 for BDF$_k$ ($h=0.05$). It
shows that the numerical solutions with an accuracy close to 0.01 are obtained when dividing
the time step by (less than) 2 between BDF$_k$ and EAB$_k$. Meanwhile, EAB$_k$
with $h=0.025$ costs less than BDF$_k$ with $h=0.05$, as developed in section
\ref{sec:implementation}. 
\\
We conclude that EAB$_k$ schemes provide a cheaper way
to compute numerical solutions at large time step for a given accuracy. The
same conclusion also holds for the BR model, see table \ref{acc_br}. A deeper
analysis of the relationship between accuracy and computational cost for the
EAB$_k$ scheme as compared to other methods is available in \cite{Cari_2016}
with the same conclusion.
\begin{table}[h!]
  \caption{Accuracy for the BR model}
  \vspace{5pt}
  \centering
  \mbox{\small{\!\!\!\!\!\!\!\!\!\!\!\!\!\!\!\!\!\!\!\!\!\!\!\!\!\!\!
  \subtable[EAB$_k$]{%
    \begin{tabular}{cccc}\toprule
      $h$ & $k=2$ & $k=3$ & $k=4$ \\ \midrule
      \num{0.2}  & \num{0.284}   & \num{0.516}   &  \\ 
      \num{0.1}  & \num{9.26e-2} & \num{9.17e-2} & \num{0.119} \\ 
      \num{0.05} & \num{8.20e-2} & \num{1.09e-2} & \num{8.96e-3} \\ \bottomrule
    \end{tabular}
  }
  ~~~~~~
  % \vskip 1ex
  \subtable[BDF$_k$]{%
    \begin{tabular}{cccc}\toprule
      $h$ & $k=2$ & $k=3$ & $k=4$ \\ \midrule
      \num{0.2}  & \num{9.74e-2} &  \num{4.09e-2}  &  \num{4.98e-2}  \\
      \num{0.1}  & \num{3.44e-2} &  \num{1.04e-2}  &  \num{1.27e-2}  \\
      \num{0.05} & \num{9.74e-3} &  \num{2.29e-3}  &  \num{2.02e-3}  \\ \bottomrule
    \end{tabular}
  }
}}
  \label{acc_br}
\end{table}

In table \ref{acc_br} are given the accuracies at large time step
% for the EAB$_k$ scheme
now considering the BR model. Comparison with table \ref{acc_tnnp} shows that
accuracy is preserved by dividing $h$ by 2 when switching from the BR to the
TNNP model. As already said, the TNNP model is 15 times stiffer than the BR
model. 
\\
We conclude that the EAB$_k$ schemes also exhibit a large robustness to
stiffness in terms of accuracy. This robustness is equivalent as for the
implicit BDF$_k$ schemes. This is  remarkable for an explicit scheme, as
for the robustness to stiffness in terms of critical time step discussed in
the previous subsection.
  \bibliographystyle{abbrv}
  \bibliography{biblio}

\end{document}